%%%%%%%%%%%%%%%%%%%%%%%%%%%%%%%%%%%%%%%%%%%%%%%%%%%%%%%%%%%%%%%%%%%%%%%%%%%%%%%%
%2345678901234567890123456789012345678901234567890123456789012345678901234567890
%        1         2         3         4         5         6         7         8

\documentclass[letterpaper, 10 pt, conference]{ieeeconf}  % Comment this line out if you need a4paper

\IEEEoverridecommandlockouts                              % This command is only needed if 
                                                          % you want to use the \thanks command

\overrideIEEEmargins                                      % Needed to meet printer requirements.

\usepackage{tikz}
\usepackage{amsmath}
\usepackage{amssymb}
\usepackage{dsfont}
\usepackage{tikz}
\usepackage[titlenumbered,ruled]{algorithm2e}
\usepackage{subcaption}
\usepackage{outlines}
\usepackage{balance}

\newtheorem{definition}{Definition}
\newtheorem{theorem}{Theorem}

\newtheorem{lemma}{Lemma}

\newcommand{\Lfi}{\ensuremath{\textbf{L}_{ff}^{-1}}}

\newcommand{\Ne}{\ensuremath{\mathcal N}}

\newcommand{\xf}{\ensuremath{\textbf{x}_f}}
\newcommand{\xl}{\ensuremath{\textbf{x}_l}}

\newcommand{\sz}{\ensuremath{S_0}}
\newcommand{\so}{\ensuremath{S_1}}

\newcommand{\szf}{\ensuremath{\overline{l}_0}}

\newcommand{\Simpson}{{\mathcal{O}_{\text{Sim}}}}
\newcommand{\Shannon}{{\mathcal{O}_{\text{Shan}}}}

%In case you encounter the following error:
%Error 1010 The PDF file may be corrupt (unable to open PDF file) OR
%Error 1000 An error occurred while parsing a contents stream. Unable to analyze the PDF file.
%This is a known problem with pdfLaTeX conversion filter. The file cannot be opened with acrobat reader
%Please use one of the alternatives below to circumvent this error by uncommenting one or the other
%\pdfobjcompresslevel=0
%\pdfminorversion=4

% See the \addtolength command later in the file to balance the column lengths
% on the last page of the document

% The following packages can be found on http:\\www.ctan.org
%\usepackage{graphics} % for pdf, bitmapped graphics files
%\usepackage{epsfig} % for postscript graphics files
%\usepackage{mathptmx} % assumes new font selection scheme installed
%\usepackage{times} % assumes new font selection scheme installed
%\usepackage{amsmath} % assumes amsmath package installed
%\usepackage{amssymb}  % assumes amsmath package installed

\title{\LARGE \bf
Maximizing Diversity of Opinion in Social Networks
}

\author{Erika Mackin and Stacy Patterson% <-this % stops a space
%\thanks{*This work was not supported by any organization}% <-this % stops a space
\thanks{Erika Mackin and Stacy Patterson are with the Department of Computer Science, Rensselaer Polytechnic Institute, Troy, New York 12180, USA. Email: {\tt\small mackie2@rpi.edu, sep@cs.rpi.edu}}%
\thanks{This work was funded in part by NSF awards CNS-1527287 and CNS-1553340.}
}

\begin{document}

\maketitle
\thispagestyle{empty}
\pagestyle{empty}

%%%%%%%%%%%%%%%%%%%%%%%%%%%%%%%%%%%%%%%%%%%%%%%%%%%%%%%%%%%%%%%%%%%%%%%%%%%%%%%%
\begin{abstract}
We study the problem of maximizing opinion diversity in a social network that  includes opinion leaders with binary opposing opinions. The members of the network who are not leaders form their opinions using the French-DeGroot model of opinion dynamics. To quantify the diversity of such a system, we adapt two diversity measures from ecology to our setting, the Simpson Diversity Index and the Shannon Index. Using these two  measures, we formalize the problem of how to place a single leader with opinion 1, given a network with a leader with opinion 0, so as to maximize the opinion diversity. We give analytical solutions to these problems for paths, cycles, and trees, and we highlight our results through a numerical example.
\end{abstract}

%%%%%%%%%%%%%%%%%%%%%%%%%%%%%%%%%%%%%%%%%%%%%%%%%%%%%%%%%%%%%%%%%%%%%%%%%%%%%%%%
\section{Introduction}
As social networks and social media become an increasingly important part of our lives, the study of how opinions form and spread in these networks has become a rich area of study. Problems of interest include how quickly members of a social network converge to agreement~\cite{GS14},  how to identify the most influential users~\cite{KKT03}, the effect of friendly vs. antagonistic interactions between users~\cite{ZZ18}, which users to advertise to in order to increase the popularity of a product~\cite{TTZ18},  and so on.
%Opinion dynamics has become a rich area of study in recent years, especially given the increased prevalence of social networks,. Due to this, studying how to control the opinions of users in various ways, whether through the introduction of leaders, contrasting public and private opinions, or allowing antagonistic interactions, has been an increasingly popular area of study. 
%However, the problem most often studied is how to choose which nodes to influence so as to maximize the spread of a given value. Given the usefulness of this area for amrketing application this is unsurprising \cite{GTT13}. However, we believe that given the increasing polarization of online spaces \bnote{add cite}that it is also worthwhile to study how to maximize the diversity of opinions in a network, rather than trying to drive all nodes to the same value.

One potential downside of online social media spaces is that they encourage rapid consensus and polarization of opinions~\cite{DGM14, W15}. Although agreement in a population has its benefits, diversity of opinion is also important. A community with diverse opinions is better able to innovate due to a wider variety of potential perspectives \cite{P07}. Further, it has been argued that there are four traits necessary for a crowd to be wise, one of which is diversity of opinion~\cite{S05}.
%
%In \cite{P07}, Page argues that groups with more diverse opinions are more effective at solving novel problems than groups with more uniform opinions, and that innovation requires the "creation of new and diverse perspectives."

We study a social network where members, or \emph{nodes}, exchange opinions using the French-Degroot model, wherein nodes update their opinion based on both their current opinion and the opinions of their peers \cite{PT18}.
We assume the system contains some nodes who contribute their opinions to their neighbors but never change their own opinions. In real-world social networks, such a person could be a paid promoter of some product or political stance (i.e., social media ``influencers''), or simply very opinionated. These nodes are the opinion leaders in the network, which we refer to simply as \emph{leaders}. 
 We consider a setting in which leaders each have an  opinion of  $0$ or $1$.  For example, in an election, perhaps an opinion of $0$ corresponds to an unwavering decision to vote for Party A's candidate, while $1$ indicates the same commitment to voting for Party B's candidate, and opinions that lie in the interval $(0,1)$ represent varying levels of indecision, with an opinion of $0.5$ indicating a truly undecided voter. 
 In this setting, the follower nodes' opinions converge to values in the interval $[0,1]$. Our aim is to quantify the diversity of these opinions; intuitively, a diverse network has opinions that cover the full spectrum of the opinion interval $[0,1]$.

To formalize this notion of diversity, we  propose two diversity measures: the \emph{Simpson Opinion Diversity Index} and the \emph{Shannon Opinion Diversity Index}.  
The first is based on the Simpson Diversity Index, a concept originally from ecology that measures the diversity of a community composed of many different species \cite{S49}. The Simpson Diversity Index was designed to be maximized when all species in the area are represented equally. 
Our second measure is derived from the Shannon Index, which was originally developed in a communications context to express the probability of receiving a given text string over a communication channel. In this context, it is now generally referred to as information entropy. The Shannon Index is also used in ecology, where the probabilities represent the likelihood of a randomly selected individual belonging to a given species~\cite{S01}. The Shannon Index is maximized when all outcomes are equally probable. 

Our proposed performance measures are parameterized by a number of opinion bins. This can range from two bins,
where a maximally  diverse network is one in which half the population has an opinion in the interval $[0,0.5)$ and the remaining population has an opinion in the interval $[0.5,1]$,
to $n_f$ bins, in a network with $n_f$ follower nodes, in which case a maximally diverse network is one in which the opinions are uniformly distributed over $[0,1]$.

 %There are many different ways of updating the node states in a social network model. \bnote{talk about other traditional update models--degroot friedkin, etc.?}
%In this paper, we choose to model the network as a noise-free consensus network, where users' opinion are updated based on both their current opinion as well as the states of their neighbors, much like how our opinions change in real life based on the beliefs of our friends and co-workers, while not being completely determined by the opinions of others. In addition to nodes who continuously update their state, we also include nodes with fixed opinions. These fixed nodes, or leaders, have a value of either $0$ or $1$ and do not change. Our goal then is to place these opposing leaders in such a way as to maximize the diversity of opinion throughout the network.

We pose the problem of optimizing  the diversity of the network by selecting which nodes should act as leaders.
Specifically, we consider a network with a single leader node with opinion 0. As is, such a network will converge to a state where all opinions are 0.   We seek to identify a node that, if it becomes  a 1-valued leader,  will maximize the  diversity of the resulting opinions in the network. For instance, if we want to prevent political discussion in an online forum from being dominated by a supporter of Party A, we can invite a strong supporter of Party B into the community as well to encourage a wider variety of opinions.  We present analytical solutions to this problem for both two bins and $n_f$ bins in path, cycle, and tree graphs for both performance measures. Further, we present numerical results highlighting the difference between these performance measures.

%\bnote{specific contributions go here.}

%Under optimal leader placement, we show that even with opposing leaders biasing the opinions of the followers it is still possible to achieve an even distribution of opinion across the network. \bnote{add more about our contributions}

%We use this measure to determine the optimal placement of the $1$-leader in path graphs, cycles, and trees. 

%%How then do we define diversity in opinion? Depending on the definition, a completely polarized network could potentially be seen as diverse, if the performance measure relies on the average opinion or the distance between follower states. Rather than seeking polarization, instead, we have chosen to consider two measures of diversity: one based on a concept from ecology, the Simpson Diversity Index, and the other the distance of each node's opinion from $\frac{1}{2}$. 
%
%We then study the problem of how to place a single leader of state $1$, given an already-placed leader with state $0$, so as to optimize our two diversity measures. We consider this problem analytically in a few simple network structures: paths, cycles, and trees.

\subsubsection*{Related Work}
%The rate of convergence of the opinions of members of a social network has been studied in depth. in \cite{GS13} the rate of convergence to equilibrium is studied in terms of network structure, initial node opinions, and the varying weights nodes place on their own original values.  The rate of convergence in a network where agents susceptibility to change is determined by the extremity of their stance is studied in \cite{ABS17}.
Previous work on controlling the opinions of the network has generally focused on maximizing either the sum or the average of the node states, rather than promoting diverse opinions. Targeted placement of leaders to maximize the followers' opinions has been studied in systems with stubborn agents \cite{HZ18}, agents with both fixed internal and  modifiable external opinions \cite{GTT13}, agents whose stubbornness increases over time~\cite{AKPT18}, and opposing leaders with similar dynamics to our setting \cite{VFFO14}.
%HZ18: stubborn agent opinions not binary--nodes converge to convex combo of opins of neighboring stubborn agents
%GTT13: internal opinion and expressed opinion for each agent agents change opinion based on opinion of neighbors, also have cost function--converges to nash equilibium so now agnet has incevntive to change opiion to improve cost
%AKPT18: agents have innate opinion and resistance parameter converges to unique equilibrium is all agents are not completely non-susceptible
%VFFO18: same dynam as us--opposing leaders so converge to convex combo
In \cite{YOASS13}, they consider a network that consists of binary opposing leaders and followers who update their state via binary voting and study the problem of maximizing the sum of of the node states. And in \cite{MA18}, the problem of how to place a single leader in a directed graph so as to maximize its influence, given the presence of up to two opposing leaders,  is considered.
%YOASS13: want to maximize sum of opinions, each node updates opinion at poisson rate--picks random neighbor to change opinion to same, so eventually all opinions are 0 or 1
%MA18:
To the best of our knowledge, no other works have studied the problem of using opposing leaders to encourage diversity of node states.

The remainder of this paper is organized as follows. In  Section \ref{model.sec}, we give the system model, performance measures, and problem formulations.
 We follow this with an analysis of the optimal 1-leader placement in several network topologies in Section \ref{results.sec}. 
 In Section~\ref{example.sec}, we highlight the difference between the diversity measures via a numerical example.
 Finally, we conclude in Section \ref{concl.sec}.

\section{System Model} \label{model.sec}
We consider a set of $n$ individuals, or nodes, making up a connected, undirected, unweighted graph $G=(V,E)$,
with $V$ the set of nodes and $E$ the set of edges.  An edge  $(u,v) \in E$ denotes a social link (friendship, colleagues, etc.) between nodes $u$ and $v$.
The nodes are divided into a set of leaders and a set of followers. The set of leaders is further divided into a set of leaders with  opinion $1$, denoted by $S_1$, and a set of leaders with opinion $0$, denoted by $S_0$.
We call these sets the 1-\emph{leader set} and the 0-\emph{leader set}, respectively. 
The set of followers is $F = V \setminus (S_0 \cup S_1)$.
We let $n_f = |F|$ and $n_l = | S_1 \cup S_0|$, so that $n = n_f + n_l$. 

Each node $v \in V$ has a scalar valued state $x_v$ that represents its opinion.
Each follower node $v \in F$ executes a continuous version of the French-DeGroot opinion dynamics,
\begin{align*}
x_v(0)  &= x_v^{0} \\
\dot{x_v}(t)  &= - \sum_{u \in \Ne_v} (x_v(t)- x_u(t))
\end{align*}
where $x_v^{0}$ is the initial opinion of node $v$ and  $\Ne_i$ denotes the neighbor set of node $v$.
Leader states are initialized to 0 (for $v \in S_0$) or 1 (for $v \in S_1$), and the leader states remained fixed through the execution of the algorithm, i.e.,
for $v \in (S_1 \cup S_1)$,
\[
\dot{x}_v(t) = 0.
\]

Let $L$ be the Laplacian matrix of the graph, $L=D-A$, where $D$ is the diagonal matrix of the node degrees and $A$ is the adjacency matrix. 
Without loss of generality, we partition the nodes into leaders and followers so that their states $\textbf{x}$ can be written as
\[\textbf{x}=[ \xl^T \: \xf^T]^T\]
where $\xl$ is the vector of leader states of length $n_l$ and $\xf$ is the vector of follower states of length $n_f$. Then $L$ can be written as a block matrix:
\[L= \begin{bmatrix}
L_{ll} & L_{lf} \\
L_{fl} & L_{ff}
\end{bmatrix},
\]
where $L_{ff}$ is an $n_f \times n_f$ matrix of the interactions between followers and $L_{fl}$ is an $n_f \times n_l$ matrix representing the influence the leader nodes have on the followers. 
%\sep{Would it make sense to state that we order the leader states by 1 leaders and 0 leaders, or vice versa? Would this help in later analysis?}
The dynamics can be expressed more compactly as
\begin{align*}
\dot{\xl}(t) &= 0 \\
\dot{\xf}(t) &= -L_{ff}\xf - L_{fl} \xl.
\end{align*}

%where $\textbf{L}_{ff}$ and $\textbf{L}_{fl}$ are the follower-follower and leader-follower sub-matrices of the Laplacian, respectively, where $\textbf{L}_{ff} \in \mathds{Z}^{n_f \times n_f}$ and $\textbf{L}_{fl} \in \mathds{Z}^{n_f \times n_l}$.

It has been shown that, under these dynamics, the followers' states converge to a convex combination of the leader states, which is given by the following expression~\cite{ME10}
\begin{align}
\hat{\textbf{x}}_f = -L_{ff}^{-1}L_{fl}\xl. \label{follower.eq}
\end{align}
We call $\hat{\textbf{x}}_f$ the \emph{opinion vector} of the network $G$.
We note that since all leader opinions are either $0$ or $1$, for each follower $v$, $\hat{\bf{x}}_{f_v} \in [0,1]$.
%\sep{I think we may want to include the leaders in the opinion vector, since we will generalize to more than 2 leaders. But I'm not 100\% sure about this.
%So for the rest of this section, I am using the definition of the opinion vector above.}

\subsection{Diversity Performance Measures}
We quantify the diversity of the opinion vector $\hat{\textbf{x}}_f$ using two different diversity measures. 
Our first measure is based on the Simpson Diversity Index. 
This index was originally introduced as a measure of biological diversity, where a region with an even distribution of species is considered to be more diverse than an area where the population is dominated by only a few types of organisms. 
\begin{definition}
Consider a region with $R$ species, where each species $i$ has $n_i$ members present in the area.
The \emph{Simpson Diversity Index} is \cite{S49}:  
\[SDI=1-\frac{\sum_{i=1}^{R} n_i(n_i-1)}{n(n-1)}.\]
\end{definition}
Under this definition, $SDI=1$ represents infinite diversity, and $SDI=0$ indicates complete domination by a single species or category. 

We adapt this measure to the opinion vector by first discretizing the interval $[0,1]$ into  $R$ \emph{bins}: 
\begin{align}
b_1 &= \left[0, \frac{1}{R}\right) \label{bin1.eq} \\
b_i &= \left[\frac{i-1}{R}, \frac{i}{R}\right), ~~~i = 2 \ldots (R-1) \label{bin2.eq}  \\
b_{R} &=  \left[\frac{R-1}{R}, 1\right]. \label{bin3.eq}
\end{align}
We then count the number of opinions in $\hat{\textbf{x}}_f$  that fall into each bin.
The opinion diversity is measured as follows.
\begin{definition}
Let $c_i$ be the number of components of  $\hat{\textbf{x}}_f$ that lie in bin $b_i$.  
The \emph{Simpson Opinion Diversity Index} of a network $G$ with 0-leader set $S_0$ and 1-leader set $S_1$ is:
 \begin{align}
\Simpson(S_0, S_1, R) =1-\frac{\sum_{i=1}^{R} c_i(c_i-1)}{n_f(n_f-1)}.
\end{align}
\end{definition}

Our second diversity measure is based on  the Shannon Index,  a measure that was developed to quantify the entropy in a text string.
%\sep{Can you update this paragraph so it matches the explanation in the intro (and the truth). I took the text string thing from Wikipedia,
%which is certainly not the best resource. Please find a definitive answer on the original purpose of this index and use it both in the intro and here.}
%This index has also been applied to measuring diversity in ecosystems.
%\begin{definition}
%Consider a region with $R$ species, where each species $i$ has $n_i$ members present in the area.
%The \emph{Shannon Index} is \cite{S01}:
%\[  SI=-\sum_{i=1}^R p_i \ln(p_i)\]
%where $p_i$ is the proportion \bnote{reviewer asked for definition of this} of individuals that belong to category $i$, \bnote{so that $p_i = \frac{c_i}{n_f}. 
%\end{definition}
%This index is maximized when the individuals are evenly distributed among all $R$ categories.
%
%To adapt this measure to the opinion vector, we follow a similar procedures as for the Simpson Opinion Diversity Index.
%We divide the interval into $R$ bins as shown in (\ref{bin1.eq}) - (\ref{bin3.eq}) and use these bins in our definition.
%\begin{definition}
%Let $p_i$ be the proportion of components of $\hat{\textbf{x}}_f$  that lie in bin $b_i$, i.e., $b_i = c_i / R$, with $p_i \ln(p_i) = 0$ when  $p_i=0$.
%The \emph{Shannon Opinion Diversity Index} of a network $G$ with 0-leader set $\sz$ and 1-leader set $\so$ is
%\[
%\Shannon(\sz, \so, R) = -\sum_{i=1}^{R} p_i \ln(p_i).
%\]
%\end{definition}
This index has also been applied to measuring diversity in ecosystems.
\begin{definition}
Consider a region with $R$ species, where each species $i$ has $n_i$ members present in the area.
The \emph{Shannon Index} is \cite{S01}:
\[  SI=-\sum_{i=1}^R p_i \ln(p_i)\]
where $p_i$ is the number of individuals that belong to category $i$ divided by the number of categories. 
\end{definition}
Note that this index is maximized when the individuals are evenly distributed among all $R$ categories, and is always non-negative since $p_i \in [0,1]$.

To adapt this measure to the opinion vector, we follow a similar procedure as for the Simpson Opinion Diversity Index.
We divide the interval into $R$ bins as shown in (\ref{bin1.eq}) - (\ref{bin3.eq}) and use these bins in our definition.
\begin{definition}
Let $p_i$ be the proportion of components of $\hat{\textbf{x}}_f$  that lie in bin $b_i$, i.e., $p_i = c_i / R$, with $p_i \ln(p_i) = 0$ when  $p_i=0$.
The \emph{Shannon Opinion Diversity Index} of a network $G$ with 0-leader set $\sz$ and 1-leader set $\so$ is
\[
\Shannon(\sz, \so, R) = -\sum_{i=1}^{R} p_i \ln(p_i).
\]
\end{definition}

When there are $n_f$ buckets and there exist optimal leader sets $\sz^*$, $\so^*$ so that all $n_f$ follower opinions are uniformly distributed, then  $c_i=1$ for $i=1,\ldots, n_f$. 
It follows that 
\begin{align}
\Simpson(\sz^*, \so^*, n_f)=1 \label{maxRFSimp.eq}
\end{align}
and 
\begin{align}
\Shannon(\sz^*, \so^*, n_f)=-\ln{\left(\frac{1}{n_f}\right)},\label{maxRFShan.eq}
\end{align}
are the maximum values for both measures. Note, as $n_f$ increases, the maximum value of $\Shannon$ is unbounded.

When $R=2$ and the leader sets $\sz^*$, $\so^*$ are selected so that all $n_f$ followers are uniformly distributed, then $|c_1-c_2|\leq1$. Without loss of generality, let $c_1=\lfloor \frac{n_f}{2} \rfloor$ and $c_2=\lceil \frac{n_f}{2} \rceil$. Then 
\begin{align}
\Simpson(\sz^*, \so^*, n_f)=1-\frac{\lfloor \frac{n_f}{2} \rfloor(\lfloor \frac{n_f}{2} \rfloor-1)}{n_f(n_f-1)}- \frac{\lceil \frac{n_f}{2} \rceil(\lceil \frac{n_f}{2} \rceil-1)}{n_f(n_f-1)} \label{maxR2Simp.eq}
\end{align}
and 
\begin{align}
\Shannon(\sz^*, \so^*, n_f)=-\frac{\lfloor \frac{n_f}{2} \rfloor}{n_f}\ln{ \left(\frac{\lfloor \frac{n_f}{2} \rfloor}{n_f} \right)}-\frac{\lceil \frac{n_f}{2} \rceil}{n_f}\ln{ \left(\frac{\lceil \frac{n_f}{2} \rceil}{n_f} \right)} \label{maxR2Shan.eq}
\end{align}
are the maximum values for both measures.

In some networks, even when  $\sz^*$, $\so^*$ are chosen as the leader sets, uniform distribution of follower opinions is unachievable. In such a case, the above expressions serve as an upper bound on the resulting diversity indices.

\subsection{Problem Formulation}
%\bnote{We now consider the problem of leader selection. In general, the problem of selecting $n_1$ 1-leaders, given a pre-existing set of $n_0$ 0-leaders, where $n_l=n_0+n_1$, could be studied. Here, we consider only the subproblem of selecting a single 1-leader, given a single 0-leader has been pre-selected.}

%We formulate the problem where the 0-leader has already been selected. 
Let the pre-existing 0-leader be denoted by $\szf$.
We want to determine where to place the 1-leader, $l_1$,  such that the the diversity of opinions present in $\hat{\textbf{x}}_f$ is maximized. 
We pose two optimization problems, one for each diversity measure.

The \emph{Leader selection problem for Simpson Opinion Diversity} is: 
\begin{equation} 
\begin{array}{ll}
\underset{l_!}{\text{maximize}} & \Simpson(\szf, l_1, R) \\
%\text{subject to} & |\so | = K. 
\end{array} \tag{LS1} \label{simpProb.eq}
\end{equation}

The \emph{Leader selection problem for Shannon Opinion Diversity} is:
\begin{equation}
\begin{array}{ll}
\underset{l_1}{\text{maximize}} & \Shannon(\szf, l_1, R) \\
%\text{subject to} & |\so | = K.
\end{array} \tag{LS2} \label{shanProb.eq}
\end{equation}

%When we consider the case of placing a single 1-leader we call these problems (LS1) and (LS2), respectively.

\section{Analysis} \label{results.sec}
We now present analytical solutions to (\ref{simpProb.eq}) and (\ref{shanProb.eq}) for $K=1$ in several classes of graphs for $R=n_f$ and $R=2$.
Since $K=1$, we call the single 0-leader $l_0$ and the 1-leader $l_1$.

\subsection{Path Graphs}
We first consider a path graph of  $n$ nodes, numbered $1, 2, \ldots, n$.  The following theorem addresses the  case where $R=n_f$ for both diversity indices and proves that the optimal placement of $l_1$ is at the farthest node from $l_0$.
\begin{theorem} \label{path1.thm}
Consider a path of length $n$, with a single 0-leader node $k$. The optimal solution to both (\ref{simpProb.eq}) and (\ref{shanProb.eq}), for $R = n_f$,
is to select node $j$ as the single 1-leader node, where $j=n$ if $k < n/2$ and $j=1$ otherwise.
\end{theorem}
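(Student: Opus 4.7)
The plan is to compute the follower opinion vector on a path in closed form, tabulate the bin histogram for the placement $l_1 = n$, and directly compare against every alternative. First, I would observe that (\ref{follower.eq}) produces the harmonic extension of the leader values; on a 1D path this extension is linear between Dirichlet boundaries and constant from a Dirichlet boundary out to a free endpoint, so a follower at node $i$ strictly between $k$ and $j$ has opinion $|i-k|/|j-k|$, while followers on the far side of $k$ or of $j$ take value $0$ or $1$ respectively.

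By the reversal symmetry of the path, assume WLOG that $k \le n/2$, so the farthest endpoint is node $n$. With $l_1 = n$ the $k-1$ followers to the left of $k$ all land in $b_1$ and the $n-k-1$ middle followers have opinions $m/(n-k)$ for $m=1,\ldots,n-k-1$. The key step-size-vs-bin-width inequality is $1/(n-k) \ge 1/(n-2)$ whenever $k \ge 2$: the step between consecutive middle opinions meets or exceeds the bin width, forcing them into distinct bins, and the smallest opinion $1/(n-k)$ already lies at or beyond the right edge of $b_1$. The case $k=1$ is handled directly by checking that $m/(n-1) \in b_m$ for $m=1,\ldots,n-2$. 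This yields the explicit histogram $\{\,k-1,\, \underbrace{1,\ldots,1}_{n-k-1},\, 0,\ldots,0\,\}$, and substituting into the Simpson and Shannon formulas pins down the candidate optimal value.

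Next, compare with any other $l_1 = j$. For $k < j < n$ the histogram becomes $\{\,k-1,\, n-j,\, \underbrace{1,\ldots,1}_{j-k-1},\, 0,\ldots\,\}$; direct subtraction shows $\sum_i c_i(c_i-1)$ increases by $(n-j)(n-j-1) \ge 0$ and the Shannon entropy decreases by $\tfrac{n-j}{n_f}\ln(n-j) \ge 0$ relative to $l_1=n$. For $j < k$ the histogram is $\{\,n-k,\, j-1,\, \underbrace{1,\ldots,1}_{k-j-1},\, 0,\ldots\,\}$; the hypothesis $k \le n/2$ gives $n-k \ge k-1$, and a similar substitution (dominated by the gap $(n-k)(n-k-1) - (k-1)(k-2) = (n-2k+1)(n-2) \ge 0$ for Simpson, and an analogous nonnegative gap for Shannon) shows both indices are strictly smaller than at $l_1 = n$. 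The main obstacle is the bin-counting verification: handling the tight boundary cases $k=1$ and $k=2$ (where the step-size-vs-bin-width inequality is exactly $=$ or just fails), confirming that no middle opinion accidentally collides with $b_1$ or $b_{n_f}$, and then propagating the resulting case split through both the Simpson and Shannon comparisons, after which the final algebra is elementary.
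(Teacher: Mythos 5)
Your proposal is correct in substance, and it takes a somewhat different route from the paper. The paper proves the result by an exchange argument: using the closed form (\ref{fact1.eq}) and two bin-counting lemmas (for $l_1=n$ and $l_1=n-1$), it shows that moving $l_1$ one node closer to $l_0$ can only decrease both $\Simpson$ and $\Shannon$ (the difference being $\tfrac{2g}{n_f(n_f-1)}$ and $\tfrac{g}{n_f}\ln\tfrac{g+1}{g}+\tfrac{1}{n_f}\ln(g+1)$, respectively), and then invokes symmetry for $j<k$. You instead write down the full histogram for each candidate placement and compare every $j$ directly against $j=n$ in one shot; your identities --- the Simpson count increasing by $(n-j)(n-j-1)$ and the Shannon entropy dropping by $\tfrac{n-j}{n_f}\ln(n-j)$ for $k<j<n$, and the gap $(n-k)(n-k-1)-(k-1)(k-2)=(n-2k+1)(n-2)\ge 0$ for $j<k$ --- are correct, and your explicit treatment of the $j<k$ side is more complete than the paper's ``similar argument'' remark (though the corresponding Shannon gap still needs to be written out, e.g.\ by the same algebra as in the $k<j$ case or a majorization argument). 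Two bookkeeping caveats, both of which you flag but should resolve: (i) your generic histogram $\{k-1,\,n-j,\,1,\dots,1\}$ fails exactly when $j-k=n_f$ (i.e.\ $k=1$, $j=n-1$), where the largest interior opinion $\tfrac{n_f-1}{n_f}$ lands in the closed bin $b_{n_f}$ together with the $1$-valued follower, giving counts $c_1=0$, $c_{n_f}=2$ (this is the paper's Lemma~\ref{bins2.lem}); the collision only lowers the diversity of that alternative, so your inequalities survive, but the formula as stated must be corrected there; (ii) your comparison yields only a weak inequality at $j=n-1$ for $k\ge 2$ --- indeed both placements then produce the same multiset of bin counts, so $j=n$ is optimal but not uniquely so, which is consistent with the theorem as stated (and is a tie the paper's own case split quietly skips over). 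Your approach buys a transparent, global comparison and exposes this tie; the paper's stepwise argument avoids writing general histograms but needs its two lemmas and leaves the $j<k$ side to symmetry.
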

%
%To prove this theorem, first we note the useful fact that, for a path graph with 0-leader node $1$ and 1 1-leader node $k$~\cite{VFFO14}, 
%\begin{align} {\hat{x}_{f_j}=\frac{j-1}{k-1}} \text{for } j=2, \ldots, k-1. \label{fact1.eq} \end{align} 

To prove this theorem, first we note the useful fact that, for a path graph with $l_0 = k$ and $l_1=j$, where $k<j$~\cite{VFFO14}, 
\begin{align} {\hat{\bf{x}}_{f_v}=\frac{v-k}{j-k}} \text{ for }v=k+1, \ldots, j-1. \label{fact1.eq} \end{align} 
We also make use of the following lemmas.

\begin{lemma} \label{bins.lem}
Consider a path of $n$ nodes with $l_0=1$ and $l_1=n$, so that $\hat{\bf{x}}_{f_i}=\frac{i}{n_f+1}$ for $i=1, \ldots, n_f$. Then for all bins, $c_i=1$.
\end{lemma}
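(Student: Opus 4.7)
The plan is to show that the assignment sending each follower opinion $\hat{\bf{x}}_{f_i}=\frac{i}{n_f+1}$ to the bin containing it is a bijection onto $\{b_1,\ldots,b_{n_f}\}$; this immediately forces $c_i=1$ for every $i$. Since the opinion formula is already supplied in the hypothesis (and in turn follows from (\ref{fact1.eq}) with $k=1$ and $j=n$), there is no dynamics left to compute and the proof becomes a straightforward check against the bin definitions (\ref{bin1.eq})--(\ref{bin3.eq}) with $R=n_f$.

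Concretely, I would claim that $\hat{\bf{x}}_{f_i}$ lies in bin $b_i$ and verify the defining inequalities
\[
\frac{i-1}{n_f}\;\le\;\frac{i}{n_f+1}\;<\;\frac{i}{n_f}
\qquad\text{for } 1\le i\le n_f.
\]
The right-hand inequality is immediate from $n_f+1>n_f$ together with $i>0$. Cross-multiplying the left-hand inequality reduces it to $(i-1)(n_f+1)\le i\,n_f$, i.e., $i\le n_f+1$, which holds throughout the range. Once $i\mapsto b_i$ is established as an injection between two sets of size $n_f$, a pigeonhole step forces exactly one opinion into each bin, so $c_i=1$ for all $i$.

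The only subtlety worth flagging is the endpoint convention in (\ref{bin1.eq})--(\ref{bin3.eq}): because every opinion $\frac{i}{n_f+1}$ lies strictly inside the open interval $(0,1)$, the mixed open/closed boundaries of $b_1$ and $b_{n_f}$ never come into play, so treating all bins uniformly as half-open is harmless. I therefore do not expect a genuine obstacle here; the proof should amount to a short direct verification, and its real purpose is to serve as the base ``uniform distribution'' configuration used in the proof of Theorem~\ref{path1.thm}.
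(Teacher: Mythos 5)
Your proof is correct and follows essentially the same route as the paper: a direct verification of the inequalities $\frac{i-1}{n_f}\le \frac{i}{n_f+1}<\frac{i}{n_f}$ placing each opinion $\hat{\bf{x}}_{f_i}$ in bin $b_i$, which immediately gives $c_i=1$ for all bins. The added bijection/pigeonhole phrasing and the remark about the endpoint conventions are harmless elaborations of the same argument.
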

\begin{proof}
For all $i=1, \ldots n_f$, bin $b_i$ is bounded by $\frac{i-1}{n_f}$ below and $\frac{i}{n_f} $ above, and $\hat{\bf{x}}_{f_{i}}=\frac{i}{n_f+1}$. Observe that
${\frac{i-1}{n_f}-\frac{i}{n_f+1} < 0}$ and  ${\frac{i}{n_f}-\frac{i}{n_f+1} >0}$.
%\begin{align*}
%\frac{i-1}{n_f}-\frac{i}{n_f+1} &= \frac{(n_f+1)(i-1)-n_fi}{n_f(n_f+1)}\\
%&=\frac{i-(n_f+1)}{n_f(n_f+1)}<0
%\end{align*}
%and 
%\begin{align*}
%\frac{i}{n_f}-\frac{i}{n_f+1} &= \frac{(n_f+1)i-n_fi}{n_f(n_f+1)}\\
%&=\frac{i}{n_f(n_f+1)}>0.
%\end{align*}
Thus $\hat{\bf{x}}_{f_i}$ falls in bin $b_i$  for all $i=1, \ldots n_f$.
\end{proof}
\begin{lemma} \label{bins2.lem}
Consider a path of $n$ nodes with 0-leader node $1$ and 1-leader $n-1$ so that $\hat{\bf{x}}_{f_i}=\frac{i}{n_f}$ for $i=1, \ldots, n_f-1$ and $\hat{\bf{x}}_{f_{n_f}}=1$. Then $c_1=0$, $c_{n_f}=2$, and $c_i=1$ for $i=2,\ldots, n_f-1$.
\end{lemma}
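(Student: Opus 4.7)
The plan is to compute the bin counts $c_i$ directly from the explicit opinion values listed in the statement, using the bin definitions (\ref{bin1.eq})--(\ref{bin3.eq}) with $R = n_f$. Under this choice, $b_1 = [0, 1/n_f)$, $b_j = [(j-1)/n_f, j/n_f)$ for $j = 2, \ldots, n_f - 1$, and the top bin $b_{n_f} = [(n_f - 1)/n_f, 1]$. Because the opinion values are fully specified (they already follow from the fact (\ref{fact1.eq}) together with the observation that node $n$ has the 1-leader as its only neighbor), no dynamics need to be revisited; the entire argument is a careful bookkeeping of which bin each $\hat{\textbf{x}}_{f_i}$ belongs to, and the only subtlety is the half-open versus closed convention at the boundary points $i/n_f$.

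The main step is to show that for every $i \in \{1, \ldots, n_f - 1\}$, the opinion $\hat{\textbf{x}}_{f_i} = i/n_f$ lands in bin $b_{i+1}$. Indeed, $i/n_f$ is the included left endpoint of $b_{i+1}$ (regardless of whether $i+1 \le n_f - 1$, where (\ref{bin2.eq}) applies, or $i+1 = n_f$, where (\ref{bin3.eq}) applies), and $i/n_f$ is exactly the \emph{excluded} right endpoint of $b_i$, so the opinion cannot fall in $b_i$ or in any earlier bin. In particular, no opinion lands in $b_1$, giving $c_1 = 0$, and each of $b_2, \ldots, b_{n_f - 1}$ receives exactly one opinion, namely $\hat{\textbf{x}}_{f_{j-1}}$ for $j = 2, \ldots, n_f - 1$, so $c_j = 1$ there.

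Finally, I would handle $b_{n_f}$ separately. By the main step applied with $i = n_f - 1$, the opinion $\hat{\textbf{x}}_{f_{n_f - 1}} = (n_f - 1)/n_f$ already lies in $b_{n_f}$; additionally, the last follower's opinion $\hat{\textbf{x}}_{f_{n_f}} = 1$ lies in $b_{n_f}$ because the right endpoint of the top bin is closed, per (\ref{bin3.eq}). Combining these two contributions gives $c_{n_f} = 2$, completing the proof. The argument is entirely elementary; the only point requiring attention is the boundary convention that pushes each value $i/n_f$ into $b_{i+1}$ rather than $b_i$, and that lets $1$ join the top bin, so I do not anticipate a real obstacle beyond keeping that convention straight.
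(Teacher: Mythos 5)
Your proof is correct and follows essentially the same approach as the paper: a direct, elementary bookkeeping of which bin each explicit opinion value $i/n_f$ falls into, concluding $c_1=0$, $c_j=1$ for $j=2,\ldots,n_f-1$, and $c_{n_f}=2$. If anything, your explicit use of the closed-left/open-right boundary convention (each $i/n_f$ being the included left endpoint of $b_{i+1}$) is slightly more careful than the paper's equal-spacing phrasing, but the argument is the same.
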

\begin{proof}
The smallest follower state is $\hat{\bf{x}}_{f_1}=\frac{1}{n_f}$; therefore $\hat{\bf{x}}_{f_1}$ falls in bin $b_2$. Note that the difference between follower states for followers $1$ to $n_f-1$ is $\frac{1}{n_f}$, therefore each follower $i$ lies in bin $i+1$ for $i=2, \ldots, n_f-1$  while $\hat{\bf{x}}_{f_{n_f}}=1$. Therefore, bins $i=2, \ldots, n_f-1$ all have count $c_i=1$, while bin $b_{n_f}$ has count $c_{n_f}=2$.
%Note that $\frac{1}{p+1}-\frac{1}{n_f} > 0$.
%%\begin{align*}
%%\frac{1}{p+1}-\frac{1}{n_f}=\frac{n_f-p-1}{n_f(p+1)} > 0.
%%\end{align*}
%Since $b_1=\Big[0, \frac{1}{n_f}\big)$, $c_1=0$ for all $p < n_f$.
%Therefore, by the Pigeonhole principle, $c_h>1$ for at least one $h\not=1$. 
  \end{proof}
We now give the proof of Theorem~\ref{path1.thm}.

\begin{proof}
Without loss of generality, assume $\l_0 = k$ with $k <\frac{n}{2}$. Further,  assume
 $l_1 = j$, with $j>k$.
We can then note that there are $k-1$ nodes with opinion $0$, and $n-j$ nodes with opinion $1$.
Thus, $c_1\geq k-1$ and $c_{n_f}\geq n-j$. Note that all bins have width $\frac{1}{n_f}$. When the number of followers between $l_0$ and $l_1$ is $z<n_f$, the distance between the opinions of each consecutive pair of followers is $\frac{1}{z}<\frac{1}{n_f}$ by (\ref{fact1.eq}). Therefore, each bin $b_i$, $i=2, \ldots, n_f-1$, must have $c_i\leq1$.
Consider two cases:
\paragraph*{Case 1}Let $j-k=n_f$, so that $k=1$ and $j=n$. Then by Lemma \ref{bins.lem}, $c_i=1$ for all $i=1, \ldots, n_f$ and ${\Simpson(1, n, n_f) = 1}$.
When $l_1$ is node $n-1$, then by Lemma \ref{bins2.lem}, $c_1=0$, $c_{n_f}=2$, and $c_j=1$ for all $j=2, \ldots, n_f-1$. Then
\[\Simpson(1, n-1, n_f) = 1- \frac{2}{n_f(n_f-1)}.\]
Clearly, $\Simpson(1, n, n_f)-\Simpson(1, n-1, n_f) >0$, and the follower states are more diverse when $l_1$ is $n$ than when $l_1$ is $n-1$.

\paragraph*{Case 2} Let $j<n$ so that $j-k<n_f$. Then $c_1=k-1$ and $c_{n_f}=g$ where $g\geq n-j$. When $l_1$ is node $j-1$, the difference  between the opinions of follower nodes $i$ and $i+1$, where $i=2, \ldots, j-2$, increases from $\frac{1}{k-j}$ to $\frac{1}{k-j-1}$ by (\ref{fact1.eq}). Therefore, bins $i=2, \ldots, n_f-1$ have count $c_i\leq1$. The count of bin $b_{n_f}$ increases to $g+1$, since node $j$ now has opinion $1$ as well, while the count of bin $b_1$ is unchanged. 

Then $\Simpson$ is computed as follows:
\[\Simpson(1, j, n_f) = 1-\frac{(k-1)(k-2)}{n_f(n_f-1)} - \frac{g(g-1)}{n_f(n_f)}\]
and 
\[\Simpson(1, j-1, n_f) = 1- \frac{(k-1)(k-2)}{n_f(n_f-1)} - \frac{(g+1)g}{n_f(n_f)}.\]
The difference is then 
\begin{align*}
&\Simpson(1, j, n_f)-\Simpson(1, j-1, n_f) \\
&=\frac{(g+1)g}{n_f(n_f)}-\frac{g(g-1)}{n_f(n_f)}\\
&=\frac{2g}{n_f(n_f-1)}>0;
\end{align*}
therefore, moving $l_1$ from $j$ to $j-1$ decreases $\Simpson$.

We use these same two cases and their associated bin counts for $\Shannon$. In Case 1:
\[ \Shannon(1, n, n_f)=-n_f \frac{1}{n_f}\ln{\left(\frac{1}{n_f}\right)}\]
 and
\[ \Shannon(1, n-1, n_f)=- (n_f-2)\frac{1}{n_f}\ln{\left(\frac{1}{n_f}\right)} - \frac{2}{n_f}\ln{\left(\frac{2}{n_f}\right)}\]
and their difference is:
\begin{align*}
\Shannon(1, n, n_f) - \Shannon(1, n-1, n_f) =\frac{2}{n_f}\ln{2}>0.
%&=-2\frac{1}{n_f}\ln{\left(\frac{1}{n_f}\right)} + \frac{2}{n_f}\ln{\left(\frac{2}{n_f}\right)} \\
%&=\frac{2}{n_f}\left( \ln{\left(\frac{2}{n_f}\right)}-\ln{\left(\frac{1}{n_f}\right)}\right) \\
\end{align*}

In Case 2:
\begin{align*}
& \Shannon(k, j, n_f)=-\frac{k-1}{n_f}\ln{\left(\frac{k-1}{n_f}\right)}\\
 &~~~~-\frac{g}{n_f}\ln{\left(\frac{g}{n_f}\right)}-(n_f-g-k+1)\frac{1}{n_f}\ln{\left(\frac{1}{n_f}\right)}
 \end{align*}
 and
\begin{align*}
& \Shannon(k, j-1, n_f)=-\frac{k-1}{n_f}\ln{\left(\frac{k-1}{n_f}\right)}\\
 &~~~~-\frac{g+1}{n_f}\ln{\left(\frac{g+1}{n_f}\right)}-(n_f-g-k)\frac{1}{n_f}\ln{\left(\frac{1}{n_f}\right)}
 \end{align*}and their difference is:
\begin{align*}
&\Shannon(k, j, n_f) - \Shannon(k, j-1, n_f) \\
%&=\frac{g}{n_f}\left(\ln{\left(\frac{g+1}{n_f}\right)}-\ln{\left(\frac{g}{n_f}\right)} \right) +\frac{1}{nf}\ln{\left(\frac{g+1}{n_f}\right)}\\
%&~~~~-\frac{1}{n_f}\ln{\left(\frac{1}{n_f}\right)}\\
%&= \frac{g}{n_f}\ln{\left(\frac{g+1}{g}\right)} + \frac{1}{n_f}\left(\ln{\left(\frac{g+1}{n_f}\right)} - \ln{\left(\frac{1}{n_f}\right)} \right) \\
&=  \frac{g}{n_f}\ln{\left(\frac{g+1}{g}\right)}+ \frac{1}{n_f}\ln{(g+1)}>0.
\end{align*}

We can see that in both cases, moving $l_1$ one node closer to $l_0$ decreases $\Shannon$. 
Thus, for both problems and both cases, moving $l_1$ closer to $l_0$ always decreases the diversity, and therefore, placing $l_1$ at node $n$ is optimal. 

A similar argument can be used to show that it is suboptimal to select $j<k$ for $l_1$ for both $\Simpson$ and $\Shannon$. 
\end{proof}

By Theorem \ref{path1.thm}, when $l_0  = k < n/2$, the optimal 1-leader is $l_1=n$ for both diversity indices. The resulting diversity measures are:
\begin{align*}
\Simpson(k, n, n_f)&=\textstyle 1-\frac{(k-1)(k-2)}{n_f(n_f-1)} \\
\Shannon(k,n,n_f) &=-\textstyle \frac{k-1}{n_f}\ln{\left( \frac{k-1}{n_f}\right)}-\frac{n_f-(k-1)}{n_f}\ln{\left( \frac{1}{n_f}\right)}.
\end{align*}
Thus, even when $l_1$ is chosen optimally, the diversity may be far from the maximal diversity value of $1$ or $-\ln{\left(\frac{1}{n_f}\right)}$, for (\ref{simpProb.eq}) and (\ref{shanProb.eq}), respectively.
And, these indices are farthest from their maximal value when $l_0=(n/2) - 1$.

%
%the diversity may not be as great as in a network where all follower opinions are evenly spaced. Instead, the distance from the maximum value of $\Simpson$, $\Shannon$ depends on $k$ and is expressed as
%\[
%\Shannon(\sz^*, \so^*,n_f) - \Shannon(k,n,n_f) = \frac{(k-1)(k-2)}{n_f(n_f-1)}
%\]
%and 
%\begin{align*}
%&\Shannon(\sz^*, \so^*,n_f) - \Shannon(k,n,n_f) =\frac{k-1}{n_f}\ln{(k-1)}.\\
%\end{align*}

We now consider the maximization of Problems (\ref{simpProb.eq}) and (\ref{shanProb.eq}) when $R=2$ and prove that, unlike when $R=N_f$, diversity is maximized when $l_1$ is chosen so that $l_0$ and $l_1$ are the same distance from the endpoints.
\begin{theorem} 
Consider a path of $n$ nodes, with a single 0-leader $l_0=k$. An optimal solution to both (\ref{simpProb.eq}) and (\ref{shanProb.eq}), for $R = 2$, is to select node $j$ as the single 1-leader, where $j=n-k+1$ when $k<\frac{n}{2}$ and $j=n-k$ otherwise.
\end{theorem}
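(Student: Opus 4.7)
The plan is to reduce both (\ref{simpProb.eq}) and (\ref{shanProb.eq}) with $R=2$ to a single combinatorial question---minimize $|c_1-c_2|$ subject to $c_1+c_2=n_f$---and then show that $j=n-k+1$ (or $j=n-k$) achieves that minimum. The first step is to establish the reduction: with two bins and fixed sum,
\[
\Simpson = 1-\frac{c_1^2+c_2^2-n_f}{n_f(n_f-1)}
\]
is strictly decreasing in $c_1^2+c_2^2$, and this quantity, over integer pairs of fixed total, is minimized precisely when $|c_1-c_2|\leq 1$. Likewise $\Shannon$ reduces to the binary entropy of $c_1/n_f$, a strictly concave function of $c_1$ maximized at $c_1=c_2$. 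So both problems become: choose $j$ to minimize $|c_1-c_2|$.

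Next I would exploit symmetry. By the reflection $i\mapsto n-i+1$ of the path, I may assume $k\leq n/2$ and prove $j=n-k+1$ is optimal; the case $k>n/2$ then gives $j=n-k$ by the same symmetry. I would also restrict to $j>k$: if $j<k$, the $j-1$ followers to the left of $l_1$ converge to $1$ and the $n-k$ followers to the right of $l_0$ converge to $0$, forcing $c_1\geq n-k$ and $c_2\leq k-2$, so $|c_1-c_2|\geq n-2k+2\geq 2$, which will be dominated by what $j>k$ achieves.

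For $j>k$, set $m=j-k$. Using Fact~(\ref{fact1.eq}), the in-between follower at position $k+i$ has opinion $i/m$, which falls in $b_1=[0,1/2)$ iff $i<m/2$. Combining this with the $k-1$ opinion-$0$ followers left of $l_0$ and the $n-j$ opinion-$1$ followers right of $l_1$ gives
\[
c_1=(k-1)+\lceil m/2\rceil-1,\qquad c_2=(n-j)+\lfloor m/2\rfloor,
\]
and therefore
\[
c_2-c_1=\begin{cases} n-k-j+2, & m \text{ even},\\ n-k-j+1, & m \text{ odd.}\end{cases}
\]
Substituting $j=n-k+1$ yields $m=n-2k+1$, whose parity is opposite that of $n_f=n-2$: when $n_f$ is even, $m$ is odd and $c_2-c_1=0$; when $n_f$ is odd, $m$ is even and $c_2-c_1=1$. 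In each case this matches the minimum $|c_1-c_2|$ permitted by the parity of $n_f$, so by the reduction both indices are maximized.

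The main obstacle will be confirming that no other $j\in\{k+1,\ldots,n\}$ does as well or better. The closed form above makes this routine: within each parity class of $m$, $|c_2-c_1|$ is affine in $j$ with slope $\pm 1$, so the minimum within each class is attained at the integer $j$ closest to $n-k+1$ in that class; combining the parity of $m$ with that of $n_f$ pins down $j=n-k+1$ as a minimizer (possibly tied with one adjacent choice, consistent with the theorem's phrasing ``an optimal solution'').
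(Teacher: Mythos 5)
Your proposal is correct and follows essentially the same route as the paper: both reduce the $R=2$ problem to balancing the two bin counts (even distribution maximizes both indices) and then verify that $j=n-k+1$ (resp.\ $j=n-k$) attains the parity-optimal balance $|c_1-c_2|\leq 1$. Your version is in fact tighter than the paper's, since you justify the reduction via convexity/concavity, rule out $j<k$ explicitly, and give a closed form for $c_2-c_1$ over all $j>k$, whereas the paper only checks the proposed $j$ directly (and contains a small typo, writing $n-k-1$ where $n-k+1$ is meant).
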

\begin{proof}
%Note that when $R=2$, by Theorem \ref{R2opt.thm}, both indices are maximized when $|c_1-c_2| \leq 1$.
Note that for $R=2$, the indices are computed simply as:
\begin{align*}
\Simpson(S_0, S_1, B_2)=1-\frac{c_1(c_1-1)}{n_f(n_f-1)} - \frac{c_2(c_2-1)}{n_f(n_f-1)} 
\end{align*}
and 
\begin{align*}
\Simpson(S_0, S_1, B_2) = -\frac{c_1}{n_f}\ln{\left(\frac{c_1}{n_f}\right)} -\frac{c_2}{n_f}\ln{\left(\frac{c_2}{n_f}\right)},
\end{align*}
 and that, by definition, both $\Simpson$ and $\Shannon$ are maximized when the opinions of the $n_f$ followers are evenly distributed between the two bins. 

Without loss of generality, assume $k<\frac{n}{2}$.
%Assume $l_1$ is placed at node $u$ so $m_1=n-u$. Let $p=n_f-m_0-m_1$ be the number of nodes between $l_0$ and $l_1$. 
There are two cases:
\paragraph*{Case 1} Let $n_f$ be even.  Observe that when $l_1 = j=n-k-1$, there are $n_f-2(k-1)$ follower nodes that fall between $l_0$ and $l_1$. Then $c_1=k-1+\frac{n_f-2(k-1)}{2}=\frac{n_f-2}{2}=c_2$.  

\paragraph*{Case 2} Let $n_f$ be odd. Then it is impossible for $c_1$ and $c_2$ to be equal. Let $l_1$ be node $j=n-k+1$. Then once again there are $n_f-2(k-1)$ follower nodes between $l_0$ and $l_1$, and so $c_1=k-1+\frac{n_f-2(k-1)-1}{2} = \frac{n_f-1}{2}$ and $c_2=k-1+\frac{n_f-2(k-1)+1}{2} = \frac{n_f+1}{2}$ and thus $c_1=c_2-1$.

In both cases, both diversity indices are maximized when $l_1$ is node $j=n-k+1$. 

A similar argument can be used to show that both diversity indices are maximized when $l_1$ is node $n-k$ when $k>\frac{n}{2}$.
\end{proof}

Unlike the case when $R=n_f$, when $R=2$ and $n_f$ is even, regardless of which node is $l_0$, it is always possible to find an $l_1$ such the resulting opinion diversity is maximal, as given in (\ref{maxR2Simp.eq}) and (\ref{maxR2Shan.eq}).

% When $R=2$ and an optimal 1-leader $l_1=j$ is selected, given $l_0=k$, then 
%\[\Simpson(k, j, n_f)=1-\frac{\lfloor \frac{n_f}{2} \rfloor(\lfloor \frac{n_f}{2} \rfloor-1)}{n_f(n_f-1)}- \frac{\lceil \frac{n_f}{2} \rceil(\lceil \frac{n_f}{2} \rceil-1)}{n_f(n_f-1)}\]
%and 
%\[\Shannon(k, j, n_f)=-\frac{\lfloor \frac{n_f}{2} \rfloor}{n_f}\ln{ \left(\frac{\lfloor \frac{n_f}{2} \rfloor}{n_f} \right)}-\frac{\lceil \frac{n_f}{2} \rceil}{n_f}\ln{ \left(\frac{\lceil \frac{n_f}{2} \rceil}{n_f} \right)}.\]
%We observe that  $\Simpson(k, j, n_f) = \Simpson(\sz^*, \so^*, n_f)$ and $\Shannon(k, j, n_f)=\Shannon(\sz^*, \so^*, n_f)$. Therefore, when $R=2$, it is possible to achieve the maximum  diversity in a path graph. 

\subsection{Cycle graphs}

We now consider (LS1) and (LS2), when $R=n_f$, over a cycle of $n$ nodes, numbered $1, 2, \ldots, n$, in a clockwise manner.   
We assume, without loss of generality, that $l_0$ is node $1$.
In such a setting, we prove that diversity is maximized when $l_1$ is placed directly beside $l_0$. 
\begin{theorem} \label{cycle1.thm}
Consider a cycle of  $n$ nodes with 0-leader node $l_0 = 1$, and let $R=n_f$ The optimal solutions to Problems (\ref{simpProb.eq}) and (\ref{shanProb.eq}) are $l_1=2$ and $l_1=n-1$, respectively.
\end{theorem}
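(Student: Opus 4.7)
The plan rests on one structural observation and one symmetry reduction. First, when the cycle has leaders at $l_0 = 1$ and $l_1 = j$, fixing their values splits the cycle into two arcs, and on each arc the discrete Laplace equation (French--DeGroot steady state) is solved uniquely by linear interpolation between $0$ and $1$. Consequently, the $j-2$ ``short-arc'' followers have opinions $\{k/(j-1) : k = 1, \ldots, j-2\}$, and the $n-j$ ``long-arc'' followers have opinions $\{k/(n-j+1) : k = 1, \ldots, n-j\}$. The reflection of the cycle that fixes $l_0$ swaps $j$ with $n+2-j$ and produces the same multiset of opinions, so I may restrict attention to $j \in \{2, 3, \ldots, \lceil n/2 \rceil + 1\}$.

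For the Simpson part, I would show $l_1 = 2$ is optimal by a direct verification together with a pigeonhole step. When $j = 2$ the short arc is empty and the long arc is a path of length $n-1$ whose follower opinions are $k/(n_f+1)$, so Lemma \ref{bins.lem} applies verbatim to yield $c_i = 1$ for every $i$, giving $\Simpson(1, 2, n_f) = 1$ and matching the upper bound (\ref{maxRFSimp.eq}). For any $3 \leq j \leq n-1$, the smallest follower opinion on either arc is at least $1/(n-2) = 1/n_f$, which lies outside $b_1 = [0, 1/n_f)$, so $c_1 = 0$. Since the $n_f$ follower opinions must then fit in the remaining $n_f - 1$ bins, pigeonhole forces some $c_i \geq 2$ and thus $\Simpson(1, j, n_f) < 1$.

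For the Shannon part, the plan is to carry out the analogous case analysis with the bin-count profiles $\{c_i(j)\}$ tabulated from the opinion formulas above. After writing $\Shannon(1, j, n_f) = -\sum_i (c_i(j)/n_f)\ln(c_i(j)/n_f)$ for each admissible $j$, I would bound the pairwise difference $\Shannon(1, n-1, n_f) - \Shannon(1, j, n_f)$ by retaining only the bins whose counts differ between the two profiles and then applying elementary inequalities about $x \ln x$, following the template used in Case 2 of the proof of Theorem \ref{path1.thm}. The main obstacle I anticipate lies exactly here: the Simpson and Shannon indices are not ordinally equivalent on non-uniform count profiles, so the clean ``count the empty bins'' argument used for Simpson is insufficient and concrete numerical quantities must be compared. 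I would expect to split on the parity of $n_f$, since the lone short-arc opinion $1/2$ produced by $l_1 = n-1$ lands exactly at a bin boundary when $n_f$ is even but strictly inside a bin when $n_f$ is odd, and this shifts which bin ends up with the two-count that governs the Shannon difference.
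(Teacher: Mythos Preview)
Your Simpson argument is correct and essentially identical to the paper's: $l_1 = 2$ turns the cycle into a path with leaders at the endpoints, Lemma~\ref{bins.lem} gives $c_i = 1$ for all $i$, and for any other placement the smallest follower opinion is at least $1/n_f$, so $c_1 = 0$ and pigeonhole forces a strict drop.

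The Shannon part, however, rests on a misreading of the statement. The word ``respectively'' is unfortunate (and $n-1$ is evidently a typo for $n$, the node adjacent to $l_0$ on the other side); the paper's own proof makes clear that $l_1 = 2$ and its mirror image are optimal for \emph{both} (\ref{simpProb.eq}) and (\ref{shanProb.eq}) simultaneously. You are instead trying to show that $l_1 = n-1$, the node at graph distance $2$ from $l_0$, is the Shannon optimum, and that claim is simply false. For $n = 5$, taking $l_1 = 4$ gives follower opinions $\{1/3, 2/3, 1/2\}$, bin counts $(0,2,1)$, and $\Shannon \approx 0.637$, whereas $l_1 = 2$ gives counts $(1,1,1)$ and $\Shannon = \ln 3 \approx 1.099$. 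The case analysis you outline, including the parity split on whether $1/2$ sits on a bin boundary, would therefore not close.

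The fix is to reuse your Simpson argument verbatim for Shannon: the profile $c_i \equiv 1$ attained at $l_1 = 2$ achieves the upper bound (\ref{maxRFShan.eq}), and any other placement produces $c_1 = 0$, hence a non-uniform profile, hence strictly smaller entropy by concavity. This is exactly what the paper does, handling both indices in a single stroke.
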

\begin{proof}
Note that when $l_1$ is either node $2$ or $n-1$, the follower states $\hat{\textbf{x}}_f$ are the same as when the graph is a path of length $n$ with $l_0$ and $l_1$ located at the endpoints. Thus, by the same argument as in the proof of Theorem \ref{path1.thm}, $c_{i}=1$ for all $i=1, \ldots n_f$. When $l_1$ is at node $u$, $u \not = 2, \:n-1$, the cycle is broken into two paths, with one path having $p$ nodes between $l_0$ and $l_1$ and the other having $n_f-p$ nodes (without loss of generality, assume $p \leq n_f-p$).  Since both resulting paths have a length less than $n$, by (\ref{fact1.eq}) we note that $c_1=0$ for both paths and $c_i\geq 2$ for some $i \not = 1$. Thus, 
\[
\Simpson(1,2,n_f)=\Simpson(1,n-1,n_f)>\Simpson(1,u,n_f)
\]
and 
\[
\Shannon(1, 2, n_f)=\Shannon(1,n-1,n_f)>\Shannon(1,u,n_f).\]
 Therefore, the optimal $l_1$'s are $2$ and $n-1$.
\end{proof}

%\begin{theorem}
%Consider a cycle of size $n$ and 0-leader node $1$. The optimal solution for Problem (\ref{shanProb.eq}) when $R=2$ is to select either node $2$ or node $n-1$ as $l_1$.
%\end{theorem}
%\begin{proof}
%Following a similar argument to that used in the proof of Theorems \ref{path1.thm} and \ref{cycle1.thm}, we can conclude that (\ref{shanProb.eq}) is maximized by placing $l_1$ at either node $2$ or node $n-1$.
%\end{proof}
 When $R=n_f$, $l_0=1$ and an optimal 1-leader $l_1=2$ or $l_1=n-1$ is selected, then 
\[\Simpson(1, l_1, n_f)=1\]
and 
\[\Shannon(1, l_1, n_f)=- \ln{ \left(\frac{1}{n_f}\right)} \]
Clearly, $\Simpson(1, l_1, n_f) = \Simpson(\sz^*, \so^*, n_f)$ and $\Shannon(1, l_1, n_f)=\Shannon(\sz^*, \so^*, n_f)$. 
Therefore, when $R=n_f$ and the graph is a cycle, the maximal  diversity can be achieved, regardless of the location of $l_0$.

Next, we present results for the case where  $R=2$.
\begin{theorem} \label{R2cycle.thm}
Consider a cycle of  $n$ nodes with 0-leader node $l_0=1$. When $R=2$,  and $n_f$ is odd, $l_1=j$ is an optimal solution for all $j=2, \ldots, n$ for both Problems (\ref{simpProb.eq}) and (\ref{shanProb.eq}). When $n_f$ is even, $l_1=j$ is an optimal solution for all $j=2, 4, \ldots, n$ for both (\ref{simpProb.eq}) and (\ref{shanProb.eq}).
 \end{theorem}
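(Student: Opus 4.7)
The plan is to use the fact that placing $l_1 = j$ splits the cycle into two arcs whose interior followers decouple, so each arc behaves like a path with a $0$-leader at one end and a $1$-leader at the other. Let $p = j-2$ and $q = n-j$ be the numbers of followers on the two arcs, so $p + q = n_f$. Applying (\ref{fact1.eq}) on each arc, the follower opinions are $\{i/(p+1) : 1 \le i \le p\}$ on one arc and $\{i/(q+1) : 1 \le i \le q\}$ on the other.

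Next I would count how many opinions land in each bin $b_1 = [0,1/2)$ and $b_2 = [1/2,1]$. On the arc of size $p$, $i/(p+1) < 1/2$ iff $i < (p+1)/2$. If $p$ is even, the arc contributes $p/2$ opinions to each bin; if $p$ is odd, the median opinion takes the exact value $1/2$ and lies in $b_2$, so the arc contributes $(p-1)/2$ to $b_1$ and $(p+1)/2$ to $b_2$. The same dichotomy holds for the arc of size $q$.

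The proof then reduces to a parity count. When $n_f$ is even, $n$ is even, so $p$ and $q$ have the same parity, determined by that of $j$: both are even iff $j$ is even, yielding $c_1 = c_2 = n_f/2$, while both are odd iff $j$ is odd, yielding $|c_1 - c_2| = 2$. When $n_f$ is odd, $n$ is odd, so $p$ and $q$ have opposite parity for every $j$, and adding the two arc contributions gives $c_1 = (n_f-1)/2$ and $c_2 = (n_f+1)/2$ regardless of $j$.

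Finally, as already noted in the path-case proof, for $R=2$ both $\Simpson$ and $\Shannon$ depend only on the pair $(c_1, c_2)$ and are strictly maximized over nonnegative integer pairs summing to $n_f$ when $|c_1 - c_2|$ is as small as possible. This yields the theorem: when $n_f$ is odd, every $j \in \{2,\ldots,n\}$ achieves the best possible $|c_1-c_2| = 1$, and when $n_f$ is even, exactly the even values of $j$ attain the optimum $|c_1-c_2| = 0$. The main obstacle I anticipate is the parity bookkeeping and handling the boundary cases $j = 2$ and $j = n$, in which one arc is empty; these collapse to a single path from $l_0$ to $l_1$, and the analysis in the preceding $R=2$ path theorem carries over directly.
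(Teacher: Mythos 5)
Your proposal is correct and follows essentially the same route as the paper: split the cycle at the two leaders into two path-like arcs, apply (\ref{fact1.eq}) on each arc, count how each arc's opinions fall into the two bins according to the parity of the arc length (with the exact value $\tfrac12$ landing in $b_2$), and conclude that both indices are maximized exactly when $|c_1-c_2|$ is minimal. Your version is slightly more explicit than the paper's in deriving the per-arc counts and in linking the parities of the two arcs to the parity of $j$, but the underlying argument is the same.
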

\begin{proof}
Assume that $l_1=j$, such that the cycle is broken into two paths. Let one path have $p\geq 0$ nodes between $l_0$ and $l_1$ and the other have $n_f-p$. Without loss of generality, assume $p \leq n_f-p$.

%By Theorem \ref{R2opt.thm}, (\ref{simpProb.eq}) and (\ref{shanProb.eq}) are both maximized when $|c_1-c_2| \leq 1$.
We once again note that, in this setting,
\begin{align*}
\Simpson(S_0, S_1, B_2)=1-\frac{c_1(c_1-1)}{n_f(n_f-1)} - \frac{c_2(c_2-1)}{n_f(n_f-1)} 
\end{align*}
and 
\begin{align*}
\Simpson(S_0, S_1, B_2) = -\frac{c_1}{n_f}\ln{\left(\frac{c_1}{n_f}\right)} -\frac{c_2}{n_f}\ln{\left(\frac{c_2}{n_f}\right)}.
\end{align*}
By definition, both $\Simpson$ and $\Shannon$ are maximized when $n_f$ is evenly distributed between the two bins. 
We consider four cases and show that in each the location of $l_1$ has no effect on the bin counts $c_1$ and $c_2$.

\paragraph*{Case 1} Let $n_f$ be odd and $p$ be odd. Then $n_f-p$ must be even. Then $c_1=\frac{p-1}{2}+\frac{n_f-p}{2} = \frac{n_f-1}{2}$ and $c_2=\frac{p+1}{2}+\frac{n_f-p}{2}=\frac{n_f+1}{p}$.
\paragraph*{Case 2} Let $n_f$ be odd and $p$ be even so that $n_f-p$ must be odd. Then $c_1=\frac{p}{2}+\frac{n_f-p-1}{2} = \frac{n_f-1}{2}$ and $c_2=\frac{p}{2}+\frac{n_f-p+1}{2}=\frac{n_f+1}{p}$.
\paragraph*{Case 3} Let $n_f$ be even and $p$ be odd, so $n_f-p$ is  also odd. Then $c_1=\frac{p-1}{2}+\frac{n_f-p-1}{2} = \frac{n_f-2}{2}$ and $c_2=\frac{p+1}{2}+\frac{n_f-p+1}{2}=\frac{n_f+2}{p}$.
\paragraph*{Case 4} Finally, let $n_f$ be even, so that both $p$ and $n_f-p$ are even. Then $c_1=\frac{p}{2}+\frac{n_f-p}{2} = \frac{n_f}{2}$ and $c_2=\frac{p}{2}+\frac{n_f-p}{2}=\frac{n_f}{p}$.

In all four cases, $c_1$ and $c_2$ are independent of $p$ and $n_f-p$. When $n_f$ is odd, all possible locations of $l_1$ are equivalent and have $|c_1-c_2|=1$, and, thus, all $l_1=j$, $j=2, \ldots, n$ optimize both performance measures. When $n_f$ is even, ${|c_1-c_2|\leq 1}$ only when $p$ and $n_f-p$ are both even. Therefore $l_1=j$ is equivalent for all $j=2, 4, \ldots, n$, and all such $l_1=j$ optimize both performance measures.
\end{proof}

 When $R=2$, the graph is a cycle, $l_0=1$, and $n_f$ is odd, then all possible placements of $l_1$ are optimal.  In the case where $n_f$ is even, $l_1=j$ is optimal only when $j$ is even. %In both of these cases, $\Simpson(1, l_1, 2) = \Simpson(\sz^*, \so^*, 2)$ and $\Shannon(1, l_1, 2)=\Shannon(\sz^*, \so^*, 2)$. 
% When $n_f$ is even and the placement of $l_1$ \bnote{is such that the followers are split into two groups of an odd number} , then $|c_1-c_2|>1$ and both indices are less than their maximal value.
%  However, we note that 
%Therefore, when $R=n_f$ and the graph is a cycle, the maximum diversity can be achieved, regardless of the location of $l_0$.
%While in each case all choices of $l_1$ will yield $|c_1-c_2| \leq 1$ and are thus equivalent, note that the absolute best outcome \sep{can you express this more formally than `absolute best'? What does that mean?} is only achievable in Case 4, where $c_1=c_2$ for all possible $l_1$ and, thus, $\Simpson(k, j, n_f) = \Simpson(\sz^*, \so^*, n_f)$ and $\Shannon(k, j, n_f)=\Shannon(\sz^*, \so^*, n_f)$.

\subsection{Tree graphs}
Finally, we consider (\ref{simpProb.eq}) and (\ref{shanProb.eq}) over tree graphs. We first study the problem when $R=n_f$
in a special class of tree graph.
\begin{theorem} \label{simpleTree.thm}
Consider a tree graph of size $n$ where exactly one node $t$ has degree $deg(t)=3$ and all other nodes $i$ have $deg(i)\leq 2$. Let the 0-leader $l_0$ be at a leaf node. Then Problems (\ref{simpProb.eq}) and (\ref{shanProb.eq}) are both maximized when $l_1$ is at the end of the longest path from $l_0$.
\end{theorem}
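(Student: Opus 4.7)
The plan is to parameterize the tree by the three branch lengths emanating from $t$ and then compare the canonical placement against each alternative via bin-count bookkeeping and a convexity argument. Let the three branches from $t$ have edge-lengths $a_1, a_2, a_3 \geq 1$, with $l_0$ at the leaf of the $a_1$-branch and, without loss of generality, $a_2 \geq a_3$. The longest path from $l_0$ then terminates at the leaf of the $a_2$-branch; call this placement of $l_1$ the \emph{canonical} placement.

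First I would compute the opinion vector for the canonical case. By (\ref{fact1.eq}) applied to the $l_0$-through-$t$-to-$l_1$ path of length $a_1+a_2$, the internal followers on this path take opinions $k/(a_1+a_2)$ for $k=1,\ldots,a_1+a_2-1$, and in particular $t$ takes opinion $a_1/(a_1+a_2)$. Since the $a_3$-branch carries no leader, the steady-state equations (each follower equals the average of its neighbors, degenerating to equality on a dead-end pendant) force all $a_3$ of its followers to share $t$'s opinion. A spacing check ($1/(a_1+a_2) \geq 1/n_f$) confirms that the main-path opinions lie in distinct bins. The resulting bin profile is therefore one cluster of size $a_3+1$ plus $a_1+a_2-2$ singletons.

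Next I would enumerate the alternative placements of $l_1$ and show each yields a weakly more concentrated bin profile: (i) at the leaf of the $a_3$-branch, giving by symmetry a cluster of size $a_2+1 \geq a_3+1$; (ii) at distance $d$ from $t$ on the interior of the $a_2$- or $a_3$-branch, which pins the $a_2-d$ (or $a_3-d$) followers beyond $l_1$ to opinion $1$, adding a second cluster of that size in bin $b_{n_f}$ atop the $a_3+1$ cluster; (iii) at $t$, pinning all $a_2+a_3$ followers on the two non-$a_1$ branches to opinion $1$; (iv) at distance $d$ from $l_0$ on the interior of the $a_1$-branch, pinning every node beyond $l_1$ to opinion $1$, which gives one large cluster of size $(a_1-d)+a_2+a_3$ and eliminates the $a_3+1$ cluster entirely. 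In every sub-case the alternative profile is obtained from the canonical one by aggregating singletons into a larger cluster. Since $x(x-1)$ and $x\ln x$ are both convex, such aggregation weakly increases $\sum_i c_i(c_i-1)$ and $\sum_i (c_i/n_f)\ln(c_i/n_f)$, hence weakly decreases $\Simpson$ and $\Shannon$.

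The main obstacle I anticipate is the bin-alignment bookkeeping in sub-cases (ii)-(iv): one must confirm that the opinion-$1$ followers land in $b_{n_f}$ and that bin contains no main-path opinion, which reduces after cross-multiplication to the easy inequality $a_2+a_3-d \geq 1$. A secondary subtlety is that some alternatives produce the same bin profile as the canonical one (notably (ii) at $d=a_2-1$, where the extra cluster has size $1$ and is really just a singleton), so the theorem must be read as ``maximized when'' rather than ``uniquely maximized when.'' Once these bin-alignment checks are in place, the convexity/aggregation argument closes both (\ref{simpProb.eq}) and (\ref{shanProb.eq}) simultaneously.
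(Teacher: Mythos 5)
Your proposal is correct, and it reaches the result by a genuinely different route than the paper. The paper's proof first establishes your key structural fact---that every follower hanging off the degree-3 node $t$ on the leaderless branch inherits $t$'s opinion---via an electrical argument: the cut-vertex resistance-distance lemma of Klein and Randi\'c (Lemma \ref{cutpoint.lem}) combined with the identity $r(u,\{l_0,l_1\})=\mathbf{L}_{ff}^{-1}(u,u)$, yielding Lemma \ref{branch.lem}; you instead get it from the elementary harmonic/averaging property of the equilibrium on a pendant path, which is simpler and needs no resistance machinery. For the optimization step the paper computes the bin profiles only for the two far leaves $u_1,u_2$ (and their neighbors $u_1',u_2'$) and compares $\Simpson$ and $\Shannon$ by explicit differences, which gives closed-form index values but leaves the interior placements of $l_1$ (your sub-cases (ii)--(iv)) untreated; your enumeration of all placements plus the observation that every alternative bin profile is a coarsening of the canonical one, so that convexity/superadditivity of $x(x-1)$ and $x\ln x$ handles both \eqref{simpProb.eq} and \eqref{shanProb.eq} at once, is more uniform and arguably more complete. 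Two small slips to fix when writing it up: the bin-alignment condition for the top main-path opinion to avoid $b_{n_f}$ is $a_1+d<n_f$, i.e.\ $a_2+a_3-d\geq 2$ rather than $\geq 1$ (it still holds for every interior $d$ since $d\leq a_2-1$ or $d\leq a_3-1$ and the other branch has length at least $1$); and in sub-case (ii) with $l_1$ interior to the $a_3$-branch the dead-end cluster at $t$'s bin has size $a_2+1$, not $a_3+1$---the profile is still a coarsening of the canonical one, so the aggregation argument goes through unchanged.
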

The proof of this theorem is quite lengthy and is deferred to a technical report \cite{MP18}.

We also consider the problem of identifying an optimal $l_1$ in a general tree graph when $R=2$. 

Note that the set of followers in any graph with one 0-leader and one 1-leader can be partitioned into three sets, $P_1$, $P_2$, and $P_3$, based on the locations of the leaders. Let $path(a,b)$ be defined on a tree graph as the set of all nodes that lie on the path between nodes $a$ and $b$, inclusive of $a$ and $b$. Then, the three sets are defined as follows: 
\begin{align*}
P_1&=\{i \in V: l_0 \in path(i,l_1)\}, \\
P_2&=\{i \in V: l_1 \not \in path(i,l_0) \text{ and } l_0 \not \in path(i, l_1)\}, \\
P_3&=\{ i \in V : l_1 \in path(i, l_0)\}, 
\end{align*}
where 
$|P_1 \cup P_2 \cup P_3|=n_f$
and  
$P_1 \cap P_2 = P_1 \cap P_3 = P_2 \cap P_3 = \emptyset.$
Note also that, in a tree, the \emph{graph distance} between two nodes, $d(u,v)$, is the length of the path between nodes $u$ and $v$.
In graph $G$, shown in Figure \ref{opttree.fig}, there are $10$ follower nodes. Given $l_0$, $l_1$ as shown, $P_1 =\{1, 2, 3\}$, $P_2=\{4,5,6,7\}$, and $P_3=\{8,9,10\}$.

We now consider the case when $R=2$ and $l_0$ and $l_1$ are placed such $|P_1|=|P_3|$, that is, the number of followers with opinion $0$ and opinion $1$ are the same.
If the network is such that the nodes that lie between the leaders have evenly distributed opinions, then the existing leader placement is optimal, regardless of the size of $|P_1|$ and $P_3$. An example network showing such a leader placement is given in Figure \ref{opttree.fig}.

\begin{figure}
\centering
\begin{tikzpicture}[scale=0.8]

\node[shape=circle,draw=black] (1) at (0,0.5) {$1$};
\node[shape=circle,draw=black] (2) at (0,-0.5) {$2$};
\node[shape=circle,draw=black] (3) at (1,0) {$3$};
\node[shape=circle,draw=black] (4) at (2,0) {$l_0$};
\node[shape=circle,draw=black] (5) at (3,0) {$4$};
\node[shape=circle,draw=black] (6) at (4,0) {$5$};
\node[shape=circle,draw=black] (7) at (5,0) {$6$};
\node[shape=circle,draw=black] (12) at (6,0) {$l_1$};
\node[shape=circle,draw=black] (8) at (7,0) {$8$};
\node[shape=circle,draw=black] (9) at (8,0.5) {$9$};
\node[shape=circle,draw=black] (10) at (8,-0.5) {$10$};
\node[shape=circle,draw=black] (11) at (3,1) {$7$};

\foreach \from/\to in {1/3, 2/3, 3/4, 4/5, 5/6, 6/7, 7/12, 12/8, 8/9, 8/10, 5/11}%, 2/7, 7/8, 7/9, 7/10, 10/11}
    \draw (\from) -- (\to);

\end{tikzpicture}
\caption{Example of a tree network $G$, where, when $R=2$, $l_0$ and $l_1$ are optimal leaders, satisfying Theorem \ref{2tree.thm}.}
\label{opttree.fig}
\end{figure}

\begin{theorem} \label{2tree.thm}
Consider a tree graph $G$ with $n$ nodes, where $l_0=i$ and $|P_1|=k-1$, and let $R=2$. Without loss of generality, let $k<\frac{n}{2}$, so that $|P_2 \cup P_3| = n_f-(k-1)$.  If there exists a node $j$ such that, when $l_1=j$,  $|P_3|=k-1$, $|P_2|=n_f-2(k-1)$, and the opinions of nodes $i \in P_2$ are evenly distributed between bins $b_1$ and $b_2$, then $j$ is an $l_1$ that maximizes both (\ref{simpProb.eq}) and (\ref{shanProb.eq}).
\end{theorem}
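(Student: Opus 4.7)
The approach is to show that the hypothesized placement $l_1 = j$ yields bin counts satisfying $c_1 + c_2 = n_f$ and $|c_1 - c_2| \le 1$, which achieves the universal maxima of $\Simpson$ and $\Shannon$ at $R = 2$ given in (\ref{maxR2Simp.eq}) and (\ref{maxR2Shan.eq}). Since those expressions are upper bounds valid for any graph and any 1-leader placement, $j$ must then be optimal for both (\ref{simpProb.eq}) and (\ref{shanProb.eq}).

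First I would prove a structural lemma: in any tree with a single 0-leader and a single 1-leader, every follower in $P_1$ has equilibrium opinion exactly $0$, and every follower in $P_3$ has equilibrium opinion exactly $1$. For $v \in P_1$, by definition $l_0$ lies on the unique tree-path from $v$ to $l_1$; removing $l_0$ therefore disconnects the subtree $T_v$ containing $v$ from $l_1$. The equilibrium $\hat{\mathbf{x}}_f = -L_{ff}^{-1} L_{fl} \mathbf{x}_l$ is the unique solution of the discrete Dirichlet problem with boundary data given by the leader values. Restricted to the followers in $T_v$, the only boundary contact is $l_0$ with value $0$, so the restricted system is homogeneous; since the corresponding principal submatrix of $L$ is invertible (the component is connected to a leader), the unique solution is the zero vector, giving $\hat{x}_v = 0$. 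The symmetric argument yields $\hat{x}_v = 1$ for $v \in P_3$.

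Given the lemma, the count is immediate: the $k-1$ nodes of $P_1$ lie in $b_1 = [0, 1/2)$ and the $k-1$ nodes of $P_3$ lie in $b_2 = [1/2, 1]$. By the hypothesis on $P_2$, the remaining $n_f - 2(k-1)$ opinions split evenly, contributing $\lfloor |P_2|/2 \rfloor$ to one bin and $\lceil |P_2|/2 \rceil$ to the other. Hence $c_1 = (k-1) + \lfloor (n_f - 2(k-1))/2 \rfloor$ and $c_2 = (k-1) + \lceil (n_f - 2(k-1))/2 \rceil$, so $c_1 + c_2 = n_f$ and $|c_1 - c_2| \le 1$. By concavity of $-x(x-1)$ and $-x \ln x$, both $\Simpson$ and $\Shannon$ at $R = 2$ are maximized over non-negative integer splits of $n_f$ precisely at this balanced split, matching (\ref{maxR2Simp.eq}) and (\ref{maxR2Shan.eq}).

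The main obstacle is the structural lemma; everything else is a one-line calculation. The harmonic-extension argument above relies crucially on the tree's separator property, so it does not extend verbatim to general graphs, and a little care is needed to verify that the relevant principal submatrix of $L_{ff}$ restricted to $T_v$ is invertible so that the zero solution is truly the unique solution of the restricted Dirichlet problem.
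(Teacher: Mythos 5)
Your proposal is correct and follows essentially the same route as the paper: show that the hypothesized placement gives bin counts with $c_1+c_2=n_f$ and $|c_1-c_2|\le 1$, which attains the $R=2$ maxima in (\ref{maxR2Simp.eq}) and (\ref{maxR2Shan.eq}), hence is optimal. The only real difference is that you explicitly prove, via the grounded-Laplacian/Dirichlet argument, that followers in $P_1$ have opinion exactly $0$ and those in $P_3$ exactly $1$ (a fact the paper uses implicitly), whereas the paper instead spends its case analysis describing structural symmetry conditions on $P_2$ that your argument simply takes from the hypothesis.
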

%\bnote{The proof of this theorem is also deferred to the technical report, \cite{MP18}.}

\begin{proof}
We once again note that, when $R=2$, that
\begin{align*}
\Simpson(\szf, j, B_2)=1-\frac{c_1(c_1-1)}{n_f(n_f-1)} - \frac{c_2(c_2-1)}{n_f(n_f-1)}
\end{align*}
and 
\begin{align*}
\Shannon(\szf, j, B_2) = -\frac{c_1}{n_f}\ln{\left(\frac{c_1}{n_f}\right)} - -\frac{c_2}{n_f}\ln{\left(\frac{c_2}{n_f}\right)}.
\end{align*}
By definition, both $\Simpson$ and $\Shannon$ are maximized when $n_f$ is evenly distributed between the two bins so that $|c_1-c_2|\leq 1$.

%By Theorem \ref{R2opt.thm}, both (\ref{simpProb.eq}) and (\ref{shanProb.eq}) are maximized when $|c_1-c_2|\leq 1$.

There are two cases to consider.
\paragraph*{Case 1} When $n_f$ is even, both (\ref{simpProb.eq}) and (\ref{shanProb.eq}) are maximized when $c_1=c_2=\frac{n_f}{2}$. Let $l_1=j$ such that $|P_3|=k-1$ and $|P_2|=n_f-2(k-1)$. To ensure that the opinions of nodes $i\in P_2$ are divided evenly among $b_1$ and $b_2$, there must exist an edge $(a,b)\in E$, $a,b \in P_2$ with the following properties: $b \in path(a,l_1)$, $a \in path(b,l_0)$, $d(l_0, a)=d(b, l_1)$, and 
\begin{align*}
 |\{i: b \in path(i, l_1)\}|&=|\{j:a \in path(j, l_0)\}| \\
&= \frac{n_f-2(k-1)}{2}. 
\end{align*}
 Then $c_1=c_2= \frac{n_f}{2}$, and $j$ is an optimal 1-leader.
 %$c_1=c_2=k-1+\frac{n_f-2(k-1)-2}{2}+1= \frac{n_f}{2}$, and $j$ is an optimal $l_1$.
\paragraph*{Case 2} When $n_f$ is odd, both (\ref{simpProb.eq}) and (\ref{shanProb.eq}) are maximized when either
\[ c_1=\frac{n_f-1}{2}, ~ c_2=\frac{n_f+1}{2}\]
or
 \[c_1=\frac{n_f+1}{2},~c_2=\frac{n_f-1}{2},\] 
 as in both cases $|c_1-c_2|\leq 1$. 
  Let $l_1=j$ such that $|P_3|=k-1$ and $|P_2|=n_f-2(k-1)$. To ensure that the opinions of nodes $i\in P_2$ are divided evenly among $b_1$ and $b_2$, there must exist a node $v\in P_2$ with the following properties: 
  \begin{align*}
  &d(l_0,v)=d(v,l_1), \\
  &|\{i: v \in path(i, l_1)\} \setminus \{v\}|=g_1, \\
  &|\{j: v \in path(j, l_0)\}\setminus \{v\}|=g_2.
  \end{align*} 
  Note that, because $|P_2|$ is odd and $v$ is equidistant from $l_0$ and $l_1$, $v \in b_2$.
  Then the opinion diversity is maximized when either
  \[g_1=g_2=\frac{n_f-2(k-1)-1}{2}\]
   and, thus, $c_1=\frac{n_f-1}{2}$ and $c_2=\frac{n_f+1}{2}$, or 
   \begin{align*}
   g_1&=\frac{n_f-2(k-1)+1}{2} \text{ and }g_2=\frac{n_f-2(k-1)-3}{2},
 \end{align*}
  % When $g_1=g_2$, then $c_1=\frac{n_f-1}{2}$ and $c_2=\frac{n_f+1}{2}$.
   %$c_1=k-1+\frac{n_f-2(k-1)-1}{2}=\frac{n_f-1}{2}$ and $c_2=k-1+\frac{n_f-2(k-1)-1}{2}+1=\frac{n_f+1}{2}$. 
   %When $g_1 \not = g_2$, 
   in which case $c_1=\frac{n_f+1}{2}$ and $c_2= \frac{n_f-1}{2}$. 
   For both pairs of $g_1$ and $g_2$, $|c_1-c_2|=1$ and $j$ is an optimal $l_1$.
   
   Therefore, in both cases, when $l_1=j$, $|c_1-c_2|\leq 1$ and $l_1$ maximizes both  (\ref{simpProb.eq}) and (\ref{shanProb.eq}) for $R=2$.
   %$c_1=k-1+\frac{n_f-2(k-1)+1}{2}=\frac{n_f+1}{2}$ and $c_2=k-1+\frac{n_f-2(k-1)-3}{2}+1 = \frac{n_f-1}{2}$. 
   \end{proof}
Note that depending on the structure of $G$, such a partition of the follower nodes may not be possible. However, this does not imply that there are no other optimal choices of $l_1$. %possible choices of $l_1$ that yield $|c_1-c_2|\leq 1$.

\section{Numerical Examples} \label{example.sec}
We highlight some of the analysis in Section \ref{results.sec} via numerical examples, using the 
graph  $G$ shown in Fig. \ref{tree.fig}.

As shown in the previous section, for cycles and paths, the same $l_1$ node is optimal for both  (\ref{simpProb.eq}) and (\ref{shanProb.eq}) for both $R=n_f$ and $R=2$.
We now show that, when $R=n_f$, this relationship between the optimal solutions does not always hold. 

In the network $G$ in Fig. \ref{tree.fig}, when $l_0=1$, the optimal $l_1$ depends on the performance measure used. For (\ref{simpProb.eq}), the optimal 1-leader can be either $10$ or $11$, but for (\ref{shanProb.eq}) the optimal 1-leader is either $5$ or $6$. 

\begin{table} %[H]
  \caption{Diversity indices for different $l_1$ values.}
\centering
  \begin{tabular}{ | c | c | c |}
    \hline
   $l_1$ & $\Simpson(1, l_1, n_f)$ & $\Shannon(1, l_1, n_f)$\\ \hline \hline
   2 & 0 & 0 \\ \hline
   3 & 0.5 & 0.637 \\ \hline
   4 & 0.556& 0.849 \\\hline
   5 & 0.583 & 1.003 \\ \hline
   6 & 0.583 & 1.003  \\ \hline
   7 & 0.556 & 0.687\\ \hline
   8 & 0.556 & 0.687 \\ \hline
   9 & 0.556 & 0.687 \\ \hline
   10 & 0.639 & 0.937\\ \hline
   11 & 0.639 & 0.937 \\ \hline
  \end{tabular} \label{div.table}
\end{table}

As shown in Table \ref{div.table}, we can see that, for $y\in\{10,11\}$, $z\in\{5,6\}$, 
\begin{align*}
&\Simpson(1,y, n_f)=0.639 \\
&~~~~~>\Simpson(1, z, n_f)=0.583 
\end{align*}
and
\begin{align*}
&\Shannon(1,z,n_f)=1.003 \\
&~~~~~> \Shannon(1, y, n_f)=0.937.
\end{align*}

We can use the same graph $G$ to observe that Theorem~\ref{simpleTree.thm} does not generalize to trees with more than one node $i$ with degree $deg(i)\geq3$. The longest path from $l_0$ in $G$ terminates at node $6$, but, as shown above, $6$ is not the optimal $l_1$ for Problem (\ref{simpProb.eq}).

Although determining the optimal $l_1$ is often simple, it is not a trivial problem. When $R=n_f$, $11$ and $6$ are optimal $l_1$ nodes in $G$ for  (\ref{simpProb.eq}) and (\ref{shanProb.eq}), respectively. The worst case $l_1$ node for both problems is $2$, which results in $\Simpson(1,2,n_f)=0$ and $\Shannon(1,2,n_f)=0$.

Finally, we note that node $11$ is an optimal $l_1$ node that satisfies the requirements listed in Theorem~\ref{2tree.thm}, such that $P_1=P_3=\emptyset$, $P_2=\{i: 2\leq i \leq 10\}$, and $c_1=\frac{n_f+1}{2}=5$ and $c_2=\frac{n_f-1}{2}=4$. When $l_1=8, ~9, ~ \text{or }10$, $c_1=5$ and $c_2=4$, but $|P_1|\not=|P_3|$. We can then see that there are optimal 1-leaders not described by Theorem~\ref{2tree.thm}.

\begin{figure}
\centering
\begin{tikzpicture}[scale=0.75]

\node[shape=circle,draw=black, fill=blue!20] (1) at (0,0) {$1$};
\node[shape=circle,draw=black] (2) at (1,0) {$2$};
\node[shape=circle,draw=black] (3) at (2,0) {$3$};
\node[shape=circle,draw=black] (4) at (3,0) {$4$};
\node[shape=circle,draw=black, fill=green!20] (5) at (4,0) {$5$};
\node[shape=circle,draw=black] (6) at (5,0) {$6$};
\node[shape=circle,draw=black] (7) at (1,-1) {$7$};
\node[shape=circle,draw=black] (8) at (0,-2) {$8$};
\node[shape=circle,draw=black] (9) at (2,-2) {$9$};
\node[shape=circle,draw=black, fill=red!20] (10) at (1,-2) {$10$};
\node[shape=circle,draw=black] (11) at (1,-3.2) {$11$};

\foreach \from/\to in {1/2, 2/3, 3/4, 4/5, 5/6, 2/7, 7/8, 7/9, 7/10, 10/11}
    \draw (\from) -- (\to);

\end{tikzpicture}
\caption{Example of a tree network $G$, where one optimal leader for $\Simpson$ is shown in red, and one optimal leader for $\Shannon$ is shown in green.}
\label{tree.fig}
\end{figure}

\section{Conclusion} \label{concl.sec}
We have proposed two diversity measures, adapted from ecology, for networks with French-DeGroot opinion dynamics, where the networks contain leaders with binary opposing opinions. Further, using these measures, we have formalized the problem of maximizing opinion diversity in a network that contains  a single leader with opinion 0 by selecting which node should become the leader with opinion 1. We have presented analytical solutions to these problems for the case of a single 0-leader and a single 1-leader in paths, cycles, and tree graphs. 
%
%We proved that both performance measures are optimized in path graphs by placing the 1-leader either as far from the 0-leader as possible or the same distance from the path end as the 0-leader, depending on the number of bins follower states are sorted into. We proved that in cycles, placing the 1-leader directly beside the 0-leader is optimal for $n_f$ bins, while all options were equivalent for $2$ bins. Additionally, we proved the optimal leader location in a simple tree structure, and showed how to identify an optimal 1-leader location in general trees for the setting with $2$ bins.
 In future work, we plan to study the problem of  optimal leader placement in more general graphs and with multiple leaders of both opinion types.
%\balance

\bibliographystyle{IEEEtran}
\bibliography{opinDynBib}

\section{Appendix}
\subsection{Proof of Theorem \ref{simpleTree.thm}}
Before proceeding with the proof, we require the following lemmas:

\begin{lemma}[\cite{KR93} Lem. E]
\label{cutpoint.lem}
Let graph $G=(V,E)$ be partitioned into two components, $A$ and $B$, that share only a single vertex $x$.
The resistance distance between any two vertices $u \in A$ and $v \in B$ is: 
\begin{equation*}
r(u,v) = r(u,x) + r(x,v).
\end{equation*}
\end{lemma}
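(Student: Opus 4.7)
The plan is to use the electrical network interpretation of resistance distance. Recall that $r(u,v)$ equals the voltage difference $\phi(u)-\phi(v)$ obtained when each edge of $G$ is replaced by a unit resistor, one unit of current is injected at $u$ and extracted at $v$, and $\phi$ is the resulting node potential (harmonic everywhere except at $u$ and $v$).

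First I would set up this current-flow problem in the full graph $G$ for the given pair $u\in A$, $v\in B$. Because $A$ and $B$ share only the vertex $x$, the edge set of $G$ is the disjoint union of the edges internal to $A$ and those internal to $B$. Summing Kirchhoff's current law over every vertex of $B\setminus\{x\}$ then forces exactly one unit of current to cross from $x$ into $B$; symmetrically, one unit flows from $A$ into $x$. Restricting $\phi$ to $A$, the restriction is harmonic on $A\setminus\{u,x\}$ (each such vertex has all of its $G$-neighbors in $A$), with source $u$ and sink $x$, both of strength $1$. Hence $\phi|_A$ is exactly the unit-current solution on the subgraph $A$ for the pair $(u,x)$, which gives $\phi(u)-\phi(x)=r_A(u,x)$. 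A symmetric argument on $B$ yields $\phi(x)-\phi(v)=r_B(x,v)$.

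Next I would show $r_A(u,x)=r(u,x)$ and $r_B(x,v)=r(x,v)$ in the original graph $G$. For the first, consider injecting unit current at $u$ and extracting at $x$ in $G$: the current pattern that is the $A$-internal solution extended by zero on every edge of $B$ is a valid Kirchhoff flow, so by uniqueness of the effective-resistance current (equivalently, uniqueness of the harmonic potential up to an additive constant) it is \emph{the} flow, and its voltage drop is $r_A(u,x)$. Combining the three equalities,
\[
r(u,v) \;=\; \phi(u)-\phi(v) \;=\; \bigl[\phi(u)-\phi(x)\bigr]+\bigl[\phi(x)-\phi(v)\bigr] \;=\; r(u,x)+r(x,v).
\]

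The main (and only subtle) point is the pruning step: identifying the effective-resistance current for the pair $(u,x)$ in $G$ with the one supported entirely on $A$. I would discharge it either by invoking uniqueness of the harmonic extension with prescribed sources and sinks, or algebraically through the Moore--Penrose identity $r(a,b)=(e_a-e_b)^\top L^{+}(e_a-e_b)$, exploiting the block structure of $L$ induced by the cut vertex $x$. Once this is in hand the conclusion is a one-line telescoping of potentials as displayed above.
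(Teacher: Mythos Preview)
Your argument is correct. The electrical-network reading of resistance distance together with the cut-vertex observation that all current from $u$ to $v$ must pass through $x$ is exactly the standard route to this identity, and your pruning step (extending the $A$-internal unit flow by zero current on the $B$-side, then invoking uniqueness of the harmonic potential) is the right way to identify $r_A(u,x)$ with $r_G(u,x)$.

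There is, however, nothing to compare against: the paper does not supply its own proof of this lemma. It is quoted verbatim as Lemma~E of \cite{KR93} and used as a black box in the proof of Lemma~\ref{branch.lem}. So your proposal is not an alternative to the paper's proof but rather a self-contained justification of a result the paper simply imports.
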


\begin{figure}
\centering
\begin{tikzpicture}[scale=0.6]

\node[shape=circle,draw=black, minimum size=.75 cm] (1) at (0,0) {$l_0$};
\node[shape=circle,draw=black,minimum size=.75 cm] (4) at (3,0) {$t$};
\node[shape=circle,draw=black,minimum size=.75 cm] (5) at (5,0) {$v$};
\node[shape=circle,draw=black,minimum size=.75 cm] (6) at (7,0) {$l_1$};
\node[shape=circle,draw=black,minimum size=.75 cm] (8) at (3,-1.5) {$u$};
\node[shape=circle,draw=black,minimum size=.75 cm] (9) at (3,-3) {};

\foreach \from/\to in {5/6}
    \draw (\from) -- (\to);
\foreach \from/\to in {4/5, 4/8, 8/9, 1/4}
    \draw (\from) [dotted, thick] -- (\to);

\end{tikzpicture}
\caption{Illustration of placement of nodes for Lemma \ref{branch.lem}.}
\label{branch.fig}
\end{figure}

\begin{lemma} \label{branch.lem}
Consider a tree network with $n_f \geq 3$ where $l_0$ and $l_1$ are leaf nodes such that $deg(l_0)=deg(l_1)=1$ and at least two nodes lie between them. Consider a node $t$ such that $deg(t) \geq 3$ and $t \in path(l_0, l_1)$. Consider a node $u$, such that $t \in path(u, l_0)$ and $t \in path(u, l_1)$.  For a visual example, see Fig. \ref{branch.fig}. 
Then $\hat{\textbf{x}}_{f_u} = \hat{\textbf{x}}_{f_t}$ for all $u$ and $t$.
\end{lemma}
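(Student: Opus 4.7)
The plan is to leverage the electrical/voltage interpretation of the French--DeGroot equilibrium (\ref{follower.eq}) together with Lemma~\ref{cutpoint.lem}. The key ingredient is the standard identity that, in an unweighted graph with a single $0$-leader $l_0$ and a single $1$-leader $l_1$, the equilibrium follower opinion at any node $v$ equals the voltage at $v$ when $l_0$ and $l_1$ are held at potentials $0$ and $1$, and this voltage admits the closed-form expression
\[
\hat{\textbf{x}}_{f_v} \;=\; \frac{r(l_0,l_1) + r(v,l_0) - r(v,l_1)}{2\,r(l_0,l_1)},
\]
where $r(\cdot,\cdot)$ is the effective resistance distance on $G$ with unit edge conductances. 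This is the classical hitting-probability / Green-function identity for random walks on electrical networks and can be derived directly from (\ref{follower.eq}) by superposing two unit-current flows and normalizing.

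With the formula in hand, the proof is essentially mechanical. Because $G$ is a tree and, by the hypotheses on $u$, the vertex $t$ lies on both the unique $u$--$l_0$ path and the unique $u$--$l_1$ path, deleting $t$ from $G$ separates $u$ from $\{l_0,l_1\}$. I would let $A$ be the connected component of $G-t$ containing $u$, enlarged by $\{t\}$, and let $B$ be the remaining vertices, also enlarged by $\{t\}$. Then $A\cap B=\{t\}$ and $l_0,l_1\in B$, so Lemma~\ref{cutpoint.lem} applies to both leaders and yields
\[
r(u,l_0) = r(u,t) + r(t,l_0), \qquad r(u,l_1) = r(u,t) + r(t,l_1).
\]

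Subtracting these two identities cancels the common term $r(u,t)$, giving $r(u,l_0) - r(u,l_1) = r(t,l_0) - r(t,l_1)$. Substituting into the voltage formula then eliminates every occurrence of $u$ from the right-hand side, so $\hat{\textbf{x}}_{f_u}$ collapses to the same expression as $\hat{\textbf{x}}_{f_t}$, proving the lemma.

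The main obstacle I anticipate is justifying the voltage formula above, since the excerpt never states it. I would address this either by citing a standard reference on random walks and electrical networks or by inserting a short preliminary lemma deriving the identity from (\ref{follower.eq}). Once that link is in place, all remaining work is bookkeeping with the cutpoint lemma; a quick sanity check that $l_0,l_1\in B$ (immediate from the tree hypotheses $t\in\text{path}(u,l_0)\cap\text{path}(u,l_1)$) completes the argument.
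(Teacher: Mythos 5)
Your proposal is correct, but it takes a genuinely different route from the paper's proof. You work entirely with pairwise effective resistances in $G$: you invoke the classical voltage/hitting-probability identity
\[
\hat{\textbf{x}}_{f_v} \;=\; \frac{r(l_0,l_1) + r(v,l_0) - r(v,l_1)}{2\,r(l_0,l_1)},
\]
note that $t$ is a cut vertex separating $u$ from both leaders, apply Lemma~\ref{cutpoint.lem} twice to get $r(u,l_0)-r(u,l_1)=r(t,l_0)-r(t,l_1)$, and conclude by substitution. The paper instead stays with the grounded Laplacian: it observes from (\ref{follower.eq}) that $\hat{\textbf{x}}_{f_i}$ equals the $(i,v)$ entry of $L_{ff}^{-1}$, where $v$ is the follower adjacent to $l_1$, then combines the identity $r(u,v)=\Lfi(u,u)+\Lfi(v,v)-2\Lfi(u,v)$, the fact (from the cited reference) that $\Lfi(i,i)$ is the resistance between $i$ and the leader set, and the cutpoint lemma applied both to the pair $(u,v)$ and to the distance from $u$ to the leader set, to deduce $\Lfi(u,v)=\Lfi(t,v)$. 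Your version buys simplicity and robustness: the cutpoint lemma is applied only to pairs of individual vertices (exactly as stated), and no grounded-Laplacian/resistance-to-a-set identities are needed; the cost is that the two-leader voltage formula is not stated anywhere in the paper, so you must either cite a standard reference on electrical networks/random walks or insert a short derivation from (\ref{follower.eq}) (harmonicity of the equilibrium plus superposition), as you yourself flag. The paper's version keeps all quantities internal to its own machinery but leans on an external identity for $\Lfi(i,i)$ and on extending the cut-vertex additivity to the leader set. For completeness you should also note the trivial case $u=t$ and that $t\neq l_0,l_1$ (since $t$ has degree at least $3$ while the leaders are leaves), so the cut-vertex decomposition you use is well defined; with that, your argument is sound.
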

\begin{proof}
Let the node $v$ be such that $v \in path(l_0, l_1)$ and $(v,l_1) \in E$.
Note that, by inspection of (\ref{follower.eq}), for all nodes $i \in F$, $\hat{\textbf{x}}_{f_i}=\Lfi(i, v)$.
%note--this is dependent on fact that l_1 is a leaf, else is $\Lfi(i, v)+\Lfi(i,j)+...$ for each other neighbor j of $l_1$
By the definition of resistance distance, we know that
\[ r(u,v)=\Lfi(u,u)+\Lfi(v,v)-2\Lfi(u,v).\]
We then rearrange this to 
\[ 2\Lfi(u,v) = \Lfi(u,u)+\Lfi(v,v) - r(u,v)\]
By applying Lemma \ref{cutpoint.lem} this then becomes 
\begin{align} 
2\Lfi(u,v) &= \Lfi(u,u)+\Lfi(v,v) - (r(u,t)+r(t,v)) \nonumber  \\
&= \Lfi(u,u)+\Lfi(v,v) - \Big(r(u,t) \nonumber \\
&~~~~+ \Lfi(t,t)+ \Lfi(v,v)-2\Lfi(t,v) \Big) \nonumber \\
&=  \Lfi(u,u)- r(u,t) - \Lfi(t,t)+2\Lfi(t,v). \label{resEq.eq}
\end{align}

By the same lemma, we note that 
\begin{align}
r(u,l_0 \cup l_1)=r(u,t)+r(t,l_0 \cup l_1). \label{cutpt.eq}
\end{align}
The resistance distance between node $u$ and the set of leaders is \cite{GBS08}
\[r(u, l_0 \cup l_1) = \Lfi(u,u) \]
 and the distance between node $t$ and the set of leaders is 
 \[
 r(t, l_0 \cup l_1)=\Lfi(t,t) .
 \] 
We can substitute the above facts into (\ref{cutpt.eq}) to get the equivalent expression
 \begin{align}
 \Lfi(u,u) - r(u,t) - \Lfi(t,t) = 0 \label{cutpt2.eq}
 \end{align} 
Then we can simplify (\ref{resEq.eq}) to find that
\[\Lfi(u,v)=\Lfi(t,v),\]
and therefore 
\[\hat{\textbf{x}}_{f_u}=\hat{\textbf{x}}_{f_v}\]
thus concluding the proof.
\end{proof}

%\begin{lemma}[\cite{KR93} Theorem D]
%\label{graphDist.lem}
%Consider a graph  $G=(V,E)$. For all $u,v \in V$,  the graph distance $d(u,v)$, i.e., the inverse of the length of the shortest path between $u$ and $v$, is such that $d(u,v) \geq r(u,v)$, with equality if and only if there is exactly one path between $u$ and $v$.
%%The resistance distance $r(u,v)$ on a tree graph is equivalent to the graph distance between $u$ and $v$.
%\end{lemma}
%In an tree graph, it follows that $r(u,v) = d(u,v)$ for all nodes.

We can now conclude with the proof of Theorem \ref{simpleTree.thm}.
\begin{proof}
By definition, there are only three leaf nodes in the tree, one of which has been pre-selected to be $l_0$. 
Let the remaining two leaf nodes of $G$ be $u_1$ and $u_2$ and let $d(t, u_1)=p_1$ and $d(t, u_2)=p_2$. Without loss of generality, let $p_1  \geq p_2$. Let $u_1'$ be the neighbor of $u_1$ and let $u_2'$ be the neighbor of $u_2$. Finally, recall that node $t$ is the only node with degree $3$. Without loss of generality, let $\hat{\textbf{x}}_{f_t}$ fall in bin $b_j$.

We first show that placing $l_1$ at nodes $u_1$ and $u_1'$ is equivalent. 

When $l_1=u_1$, $|path(l_0,l_1)| \leq n_f-1$. By Lemma \ref{bins2.lem}, $c_2=0$ and the difference between two adjacent followers' opinions  on the path between $l_0$ and $l_1$ is at least $\frac{1}{n-2}$, by (\ref{fact1.eq}). Then  $c_{n_f} \leq 1$, with equality only when $p_2=1$. By Lemma \ref{branch.lem},  $c_j=p_2+1$, and thus the remaining $n_f-p_2-1$ bins all have count $1$. 

When $l_1=u_1'$, $\hat{x}_{f_{u_1}}=1$ and the distance between two adjacent followers' opinions on the path between $l_0$ and $l_1$ is at least $\frac{1}{n-3}$. Therefore, only $\hat{x}_{f_{u_1}}$ falls in bin $b_{n_f}$ and hence the distribution of bin counts remains the same as when $l_1=u_1$.  Thus, $\Simpson(1, u_1, n_f)=\Simpson(1, u_1', n_f)$ and $\Shannon(1, u_1, n_f)=\Shannon(1, u_1', n_f)$.

A similar argument can be used to show the same result for placing $l_1$ at nodes $u_2$ and $u_2'$. Note that when $p_1=p_2$, $u_1$, $u_2$, $u_1'$, and $u_2'$ are all equivalent.

Now we show that $\Simpson(l_0, u_1, n_f) \geq \Simpson(l_0, u_2, n_f)$.
Recall that when $l_1=u_1$, $c_{j}=p_2+1$, and there are $n_f-p_2-1$ bins $i$ with count $b_i=1$.
Then 
\[\Simpson(l_0, u_1, n_f)=1-\frac{(p_2+1)p_2}{n_f(n_f-1)}.\]
Similarly, 
\[\Simpson(l_0, u_2, n_f)=1-\frac{(p_1+1)p_1}{n_f(n_f-1)}.\]
The difference is then 
\begin{align*}
&\Simpson(l_0, u_1, n_f) - \Simpson(l_0, u_2, n_f) \\
&~~=\frac{(p_1+1)p_1}{n_f(n_f-1)}  - \frac{(p_2+1)p_2}{n_f(n_f-1)},
\end{align*}
which by assumption is always non-negative.

Similarly,
\begin{align*}
\Shannon(l_0, u_1, n_f)&=-\frac{p_2+1}{n_f}\ln{ \left( \frac{p_2+1}{n_f} \right)} \\
&- (n_f-p_2-1) \frac{1}{n_f}\ln{ \left( \frac{1}{n_f} \right)} 
\end{align*}
and 
\begin{align*}
\Shannon(l_0, u_2, n_f)&=-\frac{p_1+1}{n_f}\ln{ \left( \frac{p_1+1}{n_f} \right)} \\
&- (n_f-p_1-1) \frac{1}{n_f}\ln{ \left( \frac{1}{n_f} \right)} .
\end{align*}
The difference is then
\begin{align}
&\Shannon(l_0, u_1, n_f)-\Shannon(l_0, u_2, n_f) \nonumber \\
~~&=\frac{p_1+1}{n_f}\ln{ \left( \frac{p_1+1}{n_f} \right)} - \frac{p_2+1}{n_f}\ln{ \left( \frac{p_2+1}{n_f} \right)} \nonumber \\
&-(p_1-p_2) \frac{1}{n_f}\ln{ \left( \frac{1}{n_f} \right)}. \label{shanDiffTree.eq} 
\end{align}

Let $p_1=p_2+k$, for some $k\geq 0$. Then (\ref{shanDiffTree.eq}) becomes:
\begin{align*}
&\Shannon(l_0, u_1, n_f)-\Shannon(l_0, u_2, n_f) \\
~~&=\frac{p_2+k+1}{n_f}\ln{ \left( \frac{p_2+k+1}{n_f} \right)} - \frac{p_2+1}{n_f}\ln{ \left( \frac{p_2+1}{n_f} \right)}\\
&~~~~-k \frac{1}{n_f}\ln{ \left( \frac{1}{n_f} \right)}  \\
~~&=\frac{p_2+1}{n_f}\ln{ \left( \frac{p_2+k+1}{n_f} \right)} - \frac{p_2+1}{n_f}\ln{ \left( \frac{p_2+1}{n_f} \right)}\\
&~~~~+ \frac{k}{n_f}\ln{ \left( \frac{p_2+k+1}{n_f} \right)} -\frac{k}{n_f}\ln{ \left( \frac{1}{n_f} \right)}  \\
~~&=\frac{p_2+1}{n_f}\ln{ \left( \frac{p_2+k+1}{p_2+1} \right)}+ \frac{k}{n_f}\ln{ \left(p_2+k+1\right)},
\end{align*}
which is non-negative for all such $k$. 

Therefore, choosing $l_1$ to be either $u_1$ or $u_1'$, when $u_1$ is the node farthest from $l_0$, is optimal for both (\ref{simpProb.eq}) and (\ref{shanProb.eq}).
\end{proof}

\end{document}